\newtheorem{theorem}{Theorem}
\newtheorem{lemma}{Lemma}
\newtheorem{proposition}{Proposition}
\newtheorem{conjecture}{Conjecture}
\title[On the positivity of scattering operators]{On the positivity of scattering operators for Poincar\'{e}-Einstein manifolds}
\author{Fang Wang}
\address{Shanghai Jiao Tong University, 800 Dongchuan Rd, Shanghai 200240, China.}
\email{fangwang1984@sjtu.edu.cn}
\date{August 30, 2016}
\thanks{The author was supported by Shanghai Pujiang Program No. 14PJ1405400}
\begin{document}

\begin{abstract}
In this paper, we mainly study the scattering operators for the Poincar\'{e}-Einstein manifolds. Those operators give the fractional GJMS operators $P_{2\gamma}$ for the conformal infinity. If a Poincar\'{e}-Einstein manifolds $(X^{n+1}, g_+)$ is locally conformally flat and there exists an representative $g$ for the conformal infinity $(M, [g])$ such that the scalar curvature $R$ is a positive constant and $Q_4>0$, then we prove that $P_{2\gamma}$ is positive for $\gamma\in (1,2)$ and thus the first real scattering pole is less than $\frac{n}{2}-2$. 
\end{abstract}
\maketitle

\section{Introduction}

Let $(X^{n+1}, g_+)$ be a Poincar\'{e}-Einstein manifold with smooth conformal infinity $(M, [g])$, i.e. 
$\overline{X}^{n+1}$ is a smooth manifold with boundary, $x$ is a smooth boundary defining function for  $\partial X=M$ and $g_+$ is a smooth Riemannian metric in the interior which satisfies 
$$
\begin{cases}
Ric_{g_+}=-ng_+ &\textrm{in}\  X, \\
x^2g_+|_{TM}\in [g] &\textrm{on}\ M. 
\end{cases}
$$
Here we require that $x^2g_+$ can be $C^{k,\alpha}$ extended to the boundary with $k\geq 2[\frac{n-1}{2}]+1$ and $0<\alpha<1$. 
Direct computation shows that all the sectional curvatures of $(X^{n+1}, g_+)$ converge to $-1$ when approaching to the boundary. 
A standard example is the Hyperbolic space $\mathbb{H}^{n+1}$ in the ball model:
$$
X^{n+1}=\{x\in\mathbb{R}^{n+1}: |z|<1\}, \quad g_+ =\frac{4dz^2}{(1-|z|^2)^2}=\frac{4(dr^2+r^2d\theta^2)}{(1-r^2)^2},
$$
where $(r,\theta)$ is the polar coordinates. Take the geodesic normal defining function 
$
x=\frac{2(1-r)}{1+r}. 
$
Then for $x\in (0,2)$
$$
g_+=x^{-2}\left(dx^2+\left(1-\frac{x^2}{4}\right)^2 d\theta^2 \right), \quad x^2g_+|_{T\mathbb{S}^n} = d\theta^2. 
$$

The spectrum and resolvent for the Laplacian-Beltrami operator of $(X^{n+1}, g_+)$ is studied by
Mazzeo-Melrose in \cite{MM1}, Mazzeo in \cite{Ma1} and Guillarmou \cite{Gu1}. Actually the authors dealt with more general asymptotic hyperbolic manifolds. 
They showed that 
$\mathrm{Spec}(\triangle_+)=\sigma_{pp}(\triangle_+) \cup \sigma_{ac}(\triangle_+) $,  where $\sigma_{pp}(\triangle_+) $ is the $L^2$-eigenvalue set and
$\sigma_{ac}(\triangle_+)$ is the absolute spectrum, and 
$$
\sigma_{pp}(\triangle_+) \subset \left(0,\frac{n^2}{4}\right), \quad \sigma_{ac}(\triangle_+)=\left[\frac{n^2}{4}, +\infty\right).
$$
For $s\in \mathbb{C}, \mathrm{Re}(s)>\frac{n}{2}, s(n-s)\notin \sigma_{pp}(\triangle_+)$, the resolvent $R(s)=(\triangle_+-s(n-s))^{-1}$ defines a bounded map
$$
R(s): L^2(dV_{g_+}) \longrightarrow  L^2(dV_{g_+}). 
$$
Moreover $R(s)$ can be meromorphically extended to $\mathbb{C}\backslash \{\frac{n-1}{2}-K-\mathbb{N}_0\}$. Here $K$ is an integer defined by that $2K$ is the even order of $g_+$ in its asymptotic expansion near the boundary. For Poincar\'{e}-Einstein metric $g_+$, $K\geq \frac{n-1}{2}$ for $n$ odd and $K\geq \frac{n-2}{2}$ for $n$ even, according to the regularity result given in \cite{CDLS}. In particular, for Hyperbolic space $\mathbb{H}^{n+1}$, $K=+\infty$. 
The $L^2$-eigenvalues can be estimated under certain geometric assumptions. For example, in \cite{Le1}, Lee showed if $(X^{n+1}, g_+)$ is Poincar\'{e}-Einstein and its conformal infinity is of nonnegative Yamabe type, then $\sigma_{pp}(\triangle_+)=\emptyset$. 

The scattering operators associate to $(X^{n+1}, g_+)$ are define in the following way. 
Consider
$$
(\triangle_+ - s(n-s))u=0, \quad x^{s-n}u|_{M}=f\in C^{\infty}(M). 
$$
If $\mathrm{Re}(s)>\frac{n}{2},  s(n-s)\notin \sigma_{pp}(\triangle_+),  2s-n\notin \mathbb{N}$, then
$$
u=x^{n-s} F + x^sG, \quad F, G\in C^{k, \alpha}(\overline{X}), \quad F|_{M}=f. 
$$
We define the scattering operator $S(s)$ by
$$
S(s): C^{\infty}(M)\longrightarrow C^{\infty}(M), \quad S(s)f=G|_{M}. 
$$
Here $S(s)$ is a one parameter family of conformally invariant elliptic pseudo-differential operators or order $2s-n$, which can be meromorphically extended to $\mathbb{C}\backslash \{\frac{n-1}{2}-K-\mathbb{N}_0\}$ with $K$ the same as before. If $s_0>\frac{n}{2}$ is a pole satisfying $ 2s_0-n\in\mathbb{N},s_0(n-s_0)\notin\sigma_{pp}(\triangle_+)$, then the order of this pole is at most $1$ and the residue is a differential operator on $M$. In particular, if $(X^{n+1}, g_+)$ is a Poincar\'{e}-Einstein manifold and $\frac{n^2}{4}-k^2\notin\sigma_{pp}(\triangle_+)$ for $k\leq\min\{\frac{n}{2}, K\}$, then
$$
\mathrm{Res}_{s=s_0} S(s)=c_kP_{2k}, \quad c_k=\frac{(-1)^{k-1}}{2^{2k}k!(k-1)!}. 
$$
Here $P_{2k}$ is the GJMS operator of order $2k$ on $(M,g)$ with  $g=x^2g_+|_{TM}$. In particular, 
$P_2$ is the conformal Laplacian and $P_4$ is the Paneitz operator on $(M, g)$. 
 See \cite{JS1}, \cite{GZ1} for more details. 
 
 For simplicity, we define the renormalised scattering operators by
$$
P_{2\gamma} =d_{\gamma}S\left(\frac{n}{2}+\gamma\right), \quad d_{\gamma}=2^{2\gamma}\frac{\Gamma(\gamma)}{\Gamma(-\gamma)}.
$$
While $\gamma$ is not an integer, $P_{2\gamma}$ is also called the \textit{fractional GJMS operators}. Similarly, the fractional Q-curvature is defined by
$$
Q_{2\gamma}=\frac{2}{n-2\gamma} P_{2\gamma}1. 
$$
From the definition of $P_{2\gamma}$,  when $\gamma$ is not an integer, $P_{2\gamma}$ should depend on the interior metric $(X^{n+1}, g_+)$, not only on $(M, [g])$. 
A special case is the Hyperbolic space $\mathbb{H}^{n+1}$, which has conformal infinity $(\mathbb{S}^n, [g_c])$ where  $g_c=d\theta^2$ is the canonical sphere metric. 
In this case, the rigidity theorems given in  \cite{ST1} \cite{DJ1} and \cite{LQS1} tell us if $(X^{n+1}, g_+)$ is Poincar\'{e}-Eisntein with conformal infinity $(\mathbb{S}^n, [g_c])$, then $(X^{n+1}, g_+)$ must be the Hyperbolic space $\mathbb{H}^{n+1}$. 
So the fractional GJMS operators are uniquely defined, which are given in the following:
$$
\begin{aligned}
&P^{g_c}_{2\gamma}=\frac{\Gamma(B+\frac{1}{2}+\gamma)}{\Gamma(B+\frac{1}{2}-\gamma)}, \quad\mathrm{where}\quad B=\sqrt{\triangle_{g_c}+\left(\frac{n-1}{2}\right)^2}.
\end{aligned}
$$
Similarly, the fractional Q-curvature can be computed explicitly: 
$$
Q^{g_c}_{2\gamma}=\frac{2}{n-2\gamma} \frac{\Gamma(\frac{n}{2}+\gamma)}{\Gamma(\frac{n}{2}-\gamma)}. 
$$
Using the fractional GJMS operators, we can define the fractional Yamabe Invariant by
$$
\begin{aligned}
Y_{\gamma}(M, [g])=&\ \inf_{f\in C^{\infty}(M)} \frac{\int_{M} fP_{2\gamma} f \mathrm{dvol}_{g}}{\big(\int_M |f|^{\frac{2n}{n-2\gamma}} \mathrm{dvol}_{g}\big)^{\frac{n-2\gamma}{n}}}
\\
=&\ \inf_{\hat{g}\in [g]} \frac{\frac{n-2\gamma}{2}\int_{M}Q^{\hat{g}}_{2\gamma} \mathrm{dvol}_{\hat{g}}}{\left(\int_M \mathrm{dvol}_{\hat{g}}\right)^{\frac{n-2\gamma}{2}}}. 
\end{aligned}
$$
While $\gamma=1$, this is the classical Yamabe invariant.

We are mainly interested in the positivity of these renormalised scattering operators.  For $\gamma\in(0,1)$ the positivity of $P_{2\gamma}$ was studied by Guillarmou-Qing in \cite{GQ1}. 
\begin{theorem}[Guillarmou-Qing]\label{thm.gq1}
Suppose $(X^{n+1},g_+)$  ($n\geq 3$) is a Poincar\'{e}-Einstein manifold with conformal infinity $(M, [g])$. Fix a representative $g$ for the conformal infinity and assume  the scalar curvature $R$ is positive  on $(M, g)$. Then for  $\gamma\in (0, 1)$,
\begin{itemize}
\item[(a)] $Q_{2\gamma}>0$ on $M$;
\item[(b)] The first eigenvalue of $P_{2\gamma}$ is positive;
\item[(c)] The Green function of $P_{2\gamma}$ is positive;
\item[(d)] The first eigenspace of $P_{2\gamma}$ is spanned by a positive function.
\end{itemize}
\end{theorem}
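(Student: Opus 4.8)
The plan is to represent $P_{2\gamma}$ for $\gamma\in(0,1)$ through a degenerate-elliptic extension problem of Caffarelli--Silvestre type, adapted to the asymptotically hyperbolic setting by Chang--Gonz\'alez, and to read off (a)--(d) from an energy identity together with a maximum principle for that extension. Writing $s=\frac n2+\gamma$ and $a=1-2\gamma\in(-1,1)$, I would pass from the scattering solution $u$ of $(\triangle_+-s(n-s))u=0$ with $x^{s-n}u|_M=f$ to the renormalised field $U=x^{s-n}u$, which satisfies $U|_M=f$. With a special defining function $x$ and the compactified metric $\bar g=x^2g_+$, the equation for $U$ becomes
$$-\operatorname{div}_{\bar g}\!\left(x^a\nabla U\right)+x^a E\,U=0\quad\text{in }X,\qquad U|_M=f,$$
where the potential $E$ is built from the curvature of $g_+$, and the operator is recovered, up to a positive normalising constant, as the weighted Dirichlet-to-Neumann map
$$P_{2\gamma}f=-e_\gamma\lim_{x\to 0^+}x^a\,\partial_x U,\qquad e_\gamma>0.$$
The Poincar\'e--Einstein condition $\mathrm{Ric}_{g_+}=-ng_+$ makes $E$ explicit, and after normalising the conformal representative to $g$ its boundary contribution is governed by the scalar curvature $R$ of $(M,g)$.

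Multiplying the extension equation by $U$ and integrating by parts then yields an energy identity of the form
$$\kappa_\gamma\int_M f\,P_{2\gamma}f\,\mathrm{dvol}_g=\int_X x^a\Big(|\nabla U|_{\bar g}^2+E\,U^2\Big)\,dV_{\bar g},\qquad \kappa_\gamma>0.$$
Part (b) is then reduced to showing that the right-hand side is strictly positive whenever $f\not\equiv 0$: the weighted Dirichlet energy is nonnegative, and I would use the hypothesis $R>0$ to control the potential term, so that the whole quadratic form is coercive and the first eigenvalue of $P_{2\gamma}$ is positive.

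For (a), (c) and (d) I would invoke a maximum principle for the degenerate operator $LU=-\operatorname{div}(x^a\nabla U)+x^aE\,U$. Since $a\in(-1,1)$, the weight $x^a$ lies in the Muckenhoupt class $A_2$, so the theory of Fabes--Kenig--Serapioni supplies the weighted Sobolev framework, a weak maximum principle, and a Hopf boundary lemma once $E$ has the correct sign. Applying this to the extension $U_0$ of the constant datum $f\equiv 1$ gives $0\le U_0\le 1$ and, by the Hopf lemma, $\lim x^a\partial_x U_0<0$; since $Q_{2\gamma}=\frac{2}{n-2\gamma}P_{2\gamma}1$ with $n-2\gamma>0$, this yields $Q_{2\gamma}>0$ pointwise, proving (a). The same comparison principle shows $f\ge 0\Rightarrow U\ge 0\Rightarrow P_{2\gamma}$ preserves positivity in the Green sense, so the Green function is positive, proving (c). Finally, positivity of the Green function together with compactness of the resolvent places $P_{2\gamma}^{-1}$ in the Krein--Rutman framework, giving a simple first eigenvalue with a positive eigenfunction, which is (d).

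The main obstacle is the sign analysis of the potential $E$ and its coupling to the boundary scalar curvature. The weighted Dirichlet energy alone does not give positivity, so the crux is to use the Poincar\'e--Einstein structure to write $E$ in a form which, through the conformal transformation law for $R$, makes both the bulk quadratic form coercive and the operator $L$ amenable to the maximum principle under the single assumption $R>0$; this is exactly where the geometric hypothesis enters. A secondary, technical difficulty is the degeneracy of $x^a$ at $M$: justifying the integration by parts, the trace onto $M$, and the Hopf lemma requires the weighted Sobolev and trace theory for $A_2$ weights, and I expect the existence and positivity of the special defining function, together with finiteness of the weighted energy, to rely on the boundary regularity of $\bar g$ assumed in the introduction.
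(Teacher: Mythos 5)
This theorem is quoted from \cite{GQ1} rather than proved in the paper, but the method it rests on---and which Section 3 of this paper adapts to $\gamma\in(1,2)$---is a comparison argument carried out upstairs for the non-degenerate operator $\triangle_+-s(n-s)$, not the weighted extension you propose; and your route, as written, breaks down exactly at the point you yourself flag as ``the main obstacle.'' For a Poincar\'e--Einstein metric and a geodesic defining function, the zeroth-order term in the Chang--Gonz\'alez extension reduces to a positive multiple of $R_{\bar g}\,x^{a}$, where $R_{\bar g}$ is the scalar curvature of the \emph{compactified} metric $\bar g=x^2g_+$ on all of $\overline{X}$. The hypothesis $R>0$ on $(M,g)$ controls $R_{\bar g}$ only in a collar of the boundary; in the interior $R_{\bar g}$ may well be negative, so neither the coercivity of $\int_X x^a(|\nabla U|^2_{\bar g}+EU^2)\,dV_{\bar g}$ nor the comparison principle and Hopf lemma needed for $0\le U_0\le 1$ follow. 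Every one of your steps (a)--(d) leans on this unestablished sign condition for $E$. (Case--Chang remove $E$ altogether by passing to the adapted defining function $\rho^*$ built from the scattering solution itself, but constructing $\rho^*$ and transferring positivity back to $P_{2\gamma}$ already presupposes $Q_{2\gamma}>0$, so that device cannot be used to prove (a); this is why the paper's Proposition \ref{prop.2.1} takes $Q_{2\gamma}>0$ as a hypothesis.)

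The argument that actually closes the gap avoids the degenerate weight entirely: first, Lee's theorem (valid since $R>0$) rules out $L^2$-eigenvalues, so the scattering solution $u$ with datum $1$ exists and is positive for all $s\in(\tfrac n2,n)$; then one builds an explicit global comparison function $\psi=v^{(n-s)/(n-\lambda)}$ from the scattering solution $v$ at a large auxiliary parameter $\lambda$, verifies the differential inequality $\triangle_+\psi-s(n-s)\psi\le 0$ (or $\ge 0$, according to the range of $\gamma$) using only $J=\tfrac{R}{2(n-1)}>0$, and applies the ordinary maximum principle to $u/\psi$ on $X$. Because $\psi$ matches the boundary expansion of $u$ to the order below $x^{2\gamma}$ and is strictly separated from it at the next order, the resulting one-sided bound pins down the sign of the coefficient of $x^{2\gamma}$, which is exactly $Q_{2\gamma}$ up to an explicit constant; this proves (a). Parts (b)--(d) then follow from (a) by the energy identity $\lambda_1(P_{2\gamma})\ge\min_M Q_{2\gamma}$ and the Krein--Rutman argument you describe, so that portion of your plan is sound but must come \emph{after} (a), not in parallel with it. To salvage your extension-based approach you would need either $R_{\bar g}\ge 0$ on all of $\overline{X}$ (false in general) or some substitute carrying interior sign information---which is precisely what the comparison function $\psi$ supplies.
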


Based on the positivity results and the identity $S(n-s)S(s)=\mathrm{Id}$, the authors also showed that
\begin{theorem}[Guillarmou-Qing]\label{thm.gq2}
Suppose $(X^{n+1},g_+)$ ($n\geq 3$) is a Poincar\'{e}-Einstein manifold with conformal infinity $(M, [g])$. Then the Yamabe invariant $\mathcal{Y}_1(M, [g])$ is positive if and only if the first scattering pole is less than $\frac{n}{2}-1$
\end{theorem}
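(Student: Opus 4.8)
The plan is to reduce the statement to a sign condition on the lowest eigenvalue of the conformal Laplacian $P_2$ and then to track that sign along the analytic family $P_{2\gamma}$ down to $\gamma=1$. Since $\mathcal{Y}_1(M,[g])>0$ holds if and only if $[g]$ contains a metric of positive scalar curvature, which is in turn equivalent to $\lambda_1(P_2)>0$ (where $\lambda_1$ denotes the lowest eigenvalue of the renormalised operator $P_2=d_1 S(\tfrac{n}{2}+1)$), and since the sign of $\lambda_1(P_2)$ is conformally invariant, it suffices to relate that sign to the location of the first scattering pole. First I would record the dictionary furnished by the residue formula and the functional equation $S(n-s)S(s)=\mathrm{Id}$: a pole of $S(s)$ at $s\in(\tfrac n2,\tfrac n2+1)$ occurs exactly when $s(n-s)\in\sigma_{pp}(\triangle_+)$, i.e. for an $L^2$-eigenvalue in $(\tfrac{n^2}{4}-1,\tfrac{n^2}{4})$, and each such pole is reflected to a pole of $S$ in $(\tfrac n2-1,\tfrac n2)$. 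Thus ``the first scattering pole is $<\tfrac n2-1$'' is to be read as: $S$ has no pole in $[\tfrac n2-1,\tfrac n2)$, equivalently $\sigma_{pp}(\triangle_+)\cap[\tfrac{n^2}{4}-1,\tfrac{n^2}{4})=\emptyset$ together with invertibility of $P_2$.

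Next I would introduce the function $\nu(\gamma)=\lambda_1(P_{2\gamma})$ on $(0,1)$, real-analytic away from poles, and establish two facts. First, $\nu(0^+)>0$: as $\gamma\to0^+$ the renormalised operator tends to the identity, $P_{2\gamma}\to\mathrm{Id}$ (for instance $P^{g_c}_{2\gamma}\to\mathrm{Id}$ on the sphere), so $\nu(0^+)=1$. Second, $\nu(\gamma)\ne0$ at every regular point of the family: if $S(\tfrac n2+\gamma)$ and its reflection $S(\tfrac n2-\gamma)$ are both finite, then $S(n-s)S(s)=\mathrm{Id}$ forces $S(\tfrac n2+\gamma)$ to be injective, so $P_{2\gamma}$ has trivial kernel. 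Consequently, on any subinterval $(0,\gamma_0)$ free of poles, $\nu$ is continuous and nowhere zero, hence strictly positive.

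With these in hand both implications follow. For the forward direction, assume $\mathcal{Y}_1(M,[g])>0$; choosing a representative with $R>0$ and invoking Theorem \ref{thm.gq1}(b) gives $\nu(\gamma)>0$ for every $\gamma\in(0,1)$, so the family has no pole on $(\tfrac n2,\tfrac n2+1)$, i.e. no eigenvalue in $(\tfrac{n^2}{4}-1,\tfrac{n^2}{4})$; moreover $\lambda_1(P_2)>0$ makes $P_2$ invertible, ruling out a pole at $\tfrac n2-1$, so the first scattering pole lies strictly below $\tfrac n2-1$. For the converse, assume the first scattering pole is $<\tfrac n2-1$; then $S(s)$ is regular and invertible for all $s\in(\tfrac n2,\tfrac n2+1)$, so by the previous paragraph $\nu(\gamma)>0$ on $(0,1)$, and passing to the renormalised limit $\gamma\to1$ yields $\lambda_1(P_2)\ge0$.

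The main obstacle, and the step I would treat most carefully, is the endpoint $\gamma=1$, where one must upgrade $\lambda_1(P_2)\ge0$ to strict positivity and pin down the borderline case. The mechanism is the trivial pole of $S$ at $s=\tfrac n2+1$ with residue $c_1P_2=\tfrac14 P_2$: writing the functional equation near $s=\tfrac n2+1$ and its reflection $s=\tfrac n2-1$, one sees that $P_2$ is invertible precisely when $S$ is regular at $\tfrac n2-1$, whereas a nontrivial kernel of $P_2$ (the case $\mathcal{Y}_1=0$) produces a genuine pole of $S$ exactly at $\tfrac n2-1$. This threshold analysis, together with verifying that the residue of $S$ at an $L^2$-eigenvalue parameter carries the definite sign needed for $\nu$ to change sign as a pole is crossed, closes the equivalence: $\mathcal{Y}_1(M,[g])>0$ exactly when the first scattering pole lies strictly below $\tfrac n2-1$.
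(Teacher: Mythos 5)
The paper does not actually prove Theorem \ref{thm.gq2}; it quotes it from \cite{GQ1} with the one-line indication that it follows from the positivity results of Theorem \ref{thm.gq1} together with the identity $S(n-s)S(s)=\mathrm{Id}$. Your proposal follows exactly that route (continuity of $\lambda_1(P_{2\gamma})$ from the value $1$ at $\gamma=0^+$, invertibility away from poles via the functional equation, and a threshold analysis at $\gamma=1$), so in spirit it matches the intended argument. Two points, however, are genuine gaps rather than routine omissions.

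First, your dictionary for ``the first scattering pole is less than $\frac n2-1$'' only accounts for real poles in $[\frac n2-1,\frac n2+1)$. An $L^2$-eigenvalue $\mu\le\frac{n^2}{4}-1$ produces a real pole at $s_0=\frac n2+\sqrt{\frac{n^2}{4}-\mu}\ge\frac n2+1$, which already violates the conclusion, and the positivity of $P_{2\gamma}$ for $\gamma\in(0,1)$ says nothing about such eigenvalues. In the forward direction you must therefore invoke Lee's theorem \cite{Le1}, quoted in the introduction: for a Poincar\'{e}-Einstein manifold whose conformal infinity has nonnegative Yamabe invariant, $\sigma_{pp}(\triangle_+)=\emptyset$. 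This is also logically prior to your appeal to Theorem \ref{thm.gq1}(b), since $P_{2\gamma}$ is only defined once $s(n-s)\notin\sigma_{pp}(\triangle_+)$. Second, the whole equivalence hinges on the endpoint claims ``$P_2$ is invertible if and only if $S$ is regular at $\frac n2-1$'' and ``$\ker P_2\neq 0$ forces a pole at $\frac n2-1$.'' You describe the mechanism but do not establish it: extracting it from $S(n-s)S(s)=\mathrm{Id}$ near the first-order pole at $s=\frac n2+1$ (residue $c_1P_2$) requires expanding both factors in $\epsilon=s-\frac n2-1$ through the $O(1)$ terms and handling the interaction between the pole of one factor and the putative regularity of the other; this is the actual content of the corresponding lemma in \cite{GQ1}. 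Your closing remark about ``the definite sign of the residue'' is not in fact used in either direction of your argument as structured, which signals that this step has not been carried out. Filling these two points would complete the proof.
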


The first scattering pole has more interesting  interpretation in the case of $X=\Gamma\backslash \mathbb{H}^{n+1}$, while $\Gamma$ is a convex co-compact group without torsion of orientation preserving isometries of $\mathbb{H}^{n+1}$. In this case, the conformal infinity is locally conformally flat and given by the quotient $M=\Gamma\backslash \Omega(\Gamma)$ where $\Omega(\Gamma)\subset \mathbb{S}^n$ is the domain of discontinuity of $\Gamma$. In \cite{Pe1}, Perry proved that the latest real scattering pole is given by the Poincar\'{e} exponent; and Sullivan \cite{Su1} and Patterson \cite{Pa1} showed that  the Poincar\'{e} exponent of the group $\Gamma$ is equal to  the Hausdorff dimension 
$\delta_{\Gamma}$ of the limit set $\Lambda(\Gamma)=\mathbb{S}^n\backslash \Omega(\Gamma)$. Due to the work of Schoen-Yau \cite{SY1} and Nayatani \cite{Na1}, 
$\delta_{\Gamma}$ is less than $\frac{n}{2}-1$ if and only if the conformally infinity is of positive Yamabe Type.

In this paper, we mainly study the scattering operators of order between $2$ and $4$, and prove the following theorem.

\begin{theorem}\label{thm.1}
Suppose $(X^{n+1},g_+)$ ($n\geq 5$) is a locally conformally flat Poincar\'{e}-Einstein manifold with conformal infinity $(M, [g])$  and fix a representative $g$ for the conformal infinity. Assume  the scalar curvature $R$ is a positive constant and $Q_4\geq  0$ on $(M,g)$. Then for  $\gamma\in (1,2)$,
\begin{itemize}
\item[(a)] $Q_{2\gamma}>0$ on $M$;
\item[(b)] The first eigenvalue of $P_{2\gamma}$ is positive;
\item[(c)] $P_{2\gamma}$ satisfies the strong maximun principle, i.e. for $f\in C^{\infty}(M)$,  $P_{2\gamma}f\geq 0$ implies $f>0$ or $f\equiv 0$;
\item[(d)] The Green function of $P_{2\gamma}$ is positive;
\item[(e)] The first eigenspace of $P_{2\gamma}$ is spanned by a positive function.
\end{itemize}
In this case the first scattering pole $s_0\leq \frac{n}{2}-2$. Furthermore,  if  $Q_4(p)>0$ at some point $p\in M$, then   the first scattering pole $s_0< \frac{n}{2}-2$. 
\end{theorem}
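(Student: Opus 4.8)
The plan is to reduce the interval $\gamma\in(1,2)$ to the interval $(0,1)$ handled by Theorem~\ref{thm.gq1} through a factorization of $P_{2\gamma}$, then transport the five positivity statements across this factorization and read off the consequence for scattering poles. The guiding model is the round sphere: applying $\Gamma(z+1)=z\Gamma(z)$ to $P^{g_c}_{2\gamma}=\Gamma(B+\tfrac12+\gamma)/\Gamma(B+\tfrac12-\gamma)$ and using $B^2=\triangle_{g_c}+(\tfrac{n-1}{2})^2=P^{g_c}_2+\tfrac14$ gives
$$
P^{g_c}_{2\gamma}=\Big(B^2-(\gamma-\tfrac12)^2\Big)P^{g_c}_{2(\gamma-1)}=\big(P^{g_c}_2-\gamma(\gamma-1)\big)P^{g_c}_{2(\gamma-1)}.
$$
Here $\gamma-1\in(0,1)$, so the right factor is positive by Theorem~\ref{thm.gq1}, while the left factor $L_\gamma$ is second order and specialises at $\gamma=2$ to $L_2=P_2-2=P_4P_2^{-1}$, consistent with $P_4=(P_2-2)P_2$. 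I want a covariant version of this identity to survive on a general locally conformally flat Poincar\'e--Einstein manifold, with the second-order factor still positive.

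To produce the factorization $P_{2\gamma}=L_\gamma\circ P_{2(\gamma-1)}$ I would relate the scattering problems at the shifted parameters $s=\tfrac n2+\gamma$ and $s-1=\tfrac n2+(\gamma-1)$: writing $g_+=x^{-2}(dx^2+g_x)$, the bulk operator at $s$ equals the one at $s-1$ plus the constant $2s-n-1$, so the Poisson solution at $s$ can be manufactured from the one at $s-1$ by a first-order operation in $x$, and reading off the coefficient of $x^s$ exhibits $L_\gamma$ as a conformally natural second-order operator. Local conformal flatness removes any Weyl-type obstruction from the normal form of $g_x$, so that $L_\gamma$ is assembled only from $\triangle_g$, the constant $R$, and the Schouten tensor and reduces to $P_2-\gamma(\gamma-1)$ in this gauge. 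Positivity of $L_\gamma$ is then almost free: one has $L_\gamma=L_2+(2-\gamma(\gamma-1))$, so it suffices that the endpoint $L_2=P_4P_2^{-1}$ be nonnegative, and the strict gap $2-\gamma(\gamma-1)>0$ then yields $L_\gamma>0$ on the open interval. As $R>0$ makes $P_2$ positive and invertible, $L_2\ge0$ reduces to positivity of the Paneitz operator, and this is exactly where the hypotheses are used: in the locally conformally flat case with $R$ constant one has $P_4=\triangle_g^2+\delta(a_n\mathrm{Ric}+b_nRg)d+\tfrac{n-4}{2}Q_4$, whose zeroth-order term $\tfrac{n-4}{2}Q_4$ is nonnegative, so $Q_4\ge0$ together with $P_2>0$ forces $\langle P_4f,f\rangle\ge0$.

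With $P_{2\gamma}=L_\gamma P_{2(\gamma-1)}$ and $L_\gamma>0$ each conclusion descends from its analogue for $P_{2(\gamma-1)}$. For (a), $Q_{2\gamma}=\tfrac{2}{n-2\gamma}P_{2\gamma}1=\tfrac{2}{n-2\gamma}L_\gamma\big(\tfrac{n-2(\gamma-1)}{2}Q_{2(\gamma-1)}\big)$, so $Q_{2(\gamma-1)}>0$ and the maximum principle for $L_\gamma$ give $Q_{2\gamma}>0$; for (b)--(e) I would compose the positive Green function of $P_{2(\gamma-1)}$ with that of $L_\gamma$ to obtain a positivity-improving inverse for $P_{2\gamma}$, whence the strong maximum principle and the positivity of the first eigenvalue and of the ground state follow by a Krein--Rutman/Jentzsch argument. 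Finally the identity $S(n-s)S(s)=\mathrm{Id}$ converts this into the pole statement: a scattering pole $s_0\in(\tfrac n2-2,\tfrac n2)$ would make $S(n-s_0)$ non-invertible with $n-s_0\in(\tfrac n2,\tfrac n2+2)$, i.e. some $P_{2\gamma}$ non-invertible for $\gamma\in(0,2)$, which the positivity just established (together with $P_2>0$) excludes; hence $s_0\le\tfrac n2-2$, and if $Q_4(p)>0$ somewhere then $P_4$ is strictly positive by unique continuation, $S(\tfrac n2+2)$ has trivial kernel, the endpoint $s=\tfrac n2-2$ is not a pole, and $s_0<\tfrac n2-2$.

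The main obstacle is the factorization itself: showing that the sphere identity persists in a covariant form on a non-homogeneous locally conformally flat Poincar\'e--Einstein manifold, where $P_{2\gamma}$ and $P_2$ no longer commute, and pinning down the zeroth-order term of $L_\gamma$ precisely enough that the single condition $Q_4\ge0$ (with $R$ constant) forces $L_\gamma>0$ across the entire interval rather than only at its endpoints. Establishing $P_4\ge0$ from $Q_4\ge0$ in the locally conformally flat, constant-$R$ setting—controlling the first-order divergence term against the zeroth-order term—is the delicate subpoint on which the endpoint positivity, and hence the whole scheme, rests.
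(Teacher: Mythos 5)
Your argument hinges on the factorization $P_{2\gamma}=L_\gamma\circ P_{2(\gamma-1)}$ with $L_\gamma$ a second--order operator reducing to $P_2-\gamma(\gamma-1)$, and this is precisely the step that is not available: it is a sphere identity (more generally an Einstein--boundary identity), and nothing in the hypotheses makes it survive on a general locally conformally flat Poincar\'e--Einstein manifold. Already at the integer endpoint your identity would force $P_4=(P_2-2)P_2$, which fails for a locally conformally flat metric of constant scalar curvature unless $g$ is actually Einstein; the known product formulas for (fractional) GJMS operators require an Einstein representative, not merely constant $R$ and vanishing Weyl tensor of the filling. The observation that $\triangle_+-s(n-s)$ and $\triangle_+-(s-1)(n-s+1)$ differ by a constant does not produce an intertwining first--order operator in $x$ on a non--homogeneous manifold, so the ``first-order operation'' manufacturing the Poisson solution at $s$ from the one at $s-1$ has no construction behind it. You flag this as the main obstacle, but without it the entire scheme collapses. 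A secondary error: even granting the factorization, your deduction of (a) runs the maximum principle backwards --- from $Q_{2(\gamma-1)}>0$ you cannot conclude $L_\gamma\bigl(cQ_{2(\gamma-1)}\bigr)>0$ pointwise; for an operator of the form $\triangle_g+V$ with $V>0$ the maximum principle yields $h>0$ from $L_\gamma h\ge 0$, not the reverse.

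The paper's proof takes an entirely different route, closer in spirit to Guillarmou--Qing: it constructs a global test function $\psi=v^{(n-s)/(n-\lambda)}$ with $\lambda=n+2$, where $v,w$ solve auxiliary Poisson problems with boundary data $1$ and $w_0=\mathrm{const}\cdot J>0$, and proves $\triangle_+\psi-s(n-s)\psi<0$ on all of $X$ by a two--stage maximum--principle argument applied to $I$ and $II=\triangle_+I$. This is exactly where the hypotheses enter: local conformal flatness kills the interior Weyl term $W^+_{ikjl}v^{ij}v^{kl}$ in the Bochner--type identity for $\triangle_+II$, and constancy of $R$ makes $w$ constant so that all gradient cross--terms drop, leaving $\triangle_+II+2(n+2)II=-(n+2-s)|V|_+^2\le 0$. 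Comparing $u/\psi$ by the maximum principle and matching the $x^2$, $x^{2\gamma}$ and $x^4$ coefficients of the boundary expansions (where $\psi_2=u_2$ and $\psi_4>u_4$ under $Q_4\ge 0$) then forces $u_{2\gamma}>0$, i.e.\ $Q_{2\gamma}>0$. Parts (b)--(d) are quoted from Case--Chang and Lee, part (e) from Guillarmou--Qing, and the pole statement follows from positivity of $P_{2\gamma}$ on $[0,2)$ together with positivity of $P_4$ when $Q_4>0$ somewhere. If you want to salvage your outline you would need to either prove the factorization in this generality (unlikely to hold) or replace it by an interior comparison argument of the above type.
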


For the special case $X=\Gamma\backslash \mathbb{H}^{n+1}$ with conformal infinity $M=\Gamma\backslash \Omega(\Gamma)$, Theorem \ref{thm.1} implies that if there exists a representative $g$ such that the scalar curvature $R$ is a positive constant and $Q_4\geq  0$, then the Hausdorff dimension $\delta_{\Gamma}$ of the limit set is less than $\frac{n}{2}-2$. A similar result is given by Zhang \cite{Zh1} that if $R>0$ and $Q_{2\gamma}>0$ for some $\gamma\in(1,2)$, then $\delta_{\Gamma}< \frac{n}{2}-\gamma$.  

At last, we want to point out that the condition "$R$ is a positive constant" is a technical requirement. We expect to replace it by $R>0$ in the future. Based on the work of Gursky-Lin \cite{GL1}, we also make an conjecture as follows:

\begin{conjecture}\label{conj.1}

Suppose $(X^{n+1},g_+)$ ($n\geq 5$) is a  Poincar\'{e}-Einstein manifold with conformal infinity $(M, [g])$ . If the conformal infinity satisfies $\mathcal{Y}_1(M,  [g])>0,\mathcal{Y}_2(M,  [g])>0$, then the first scattering pole is less than $n/2-2$. 

\end{conjecture}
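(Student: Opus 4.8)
The plan is to reduce the assertion about scattering poles to a positivity statement for the renormalised operators across the entire range $\gamma\in(0,2)$, and then to attack that positivity by the energy philosophy underlying Theorem~\ref{thm.1}, but carried out in a conformally invariant way. The reduction runs as follows. Since $\mathcal{Y}_1(M,[g])>0$, Lee's theorem \cite{Le1} gives $\sigma_{pp}(\triangle_+)=\emptyset$, so every real pole of $R(s)$ below $\frac{n}{2}$ is a genuine scattering pole, i.e. it comes from a nontrivial kernel of the scattering operator. Combined with the functional equation $S(n-s)S(s)=\mathrm{Id}$, a pole at $s_0<\frac{n}{2}$ forces $S(n-s_0)=S\left(\frac{n}{2}+\gamma_0\right)$, with $\gamma_0=\frac{n}{2}-s_0$, to have a nontrivial kernel. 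Because $d_\gamma\neq 0$ for non-integer $\gamma$, this kernel is exactly the kernel of $P_{2\gamma_0}$. Hence the first scattering pole is
$$
s_0=\frac{n}{2}-\gamma_0,\qquad \gamma_0=\inf\{\gamma>0:\ \lambda_1(P_{2\gamma})=0\},
$$
where $\lambda_1$ denotes the first eigenvalue. Thus the desired conclusion $s_0<\frac{n}{2}-2$ is equivalent to the positivity $\lambda_1(P_{2\gamma})>0$ for all $\gamma\in(0,2]$. For $\gamma\in(0,1)$ this is precisely Theorem~\ref{thm.gq1}(b) of Guillarmou--Qing, whose only hypothesis is that some representative carry $R>0$; since $\mathcal{Y}_1(M,[g])>0$ produces such a representative through the resolution of the Yamabe problem, the range $(0,1)$ is settled. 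The remaining content is the range $\gamma\in(1,2)$ together with the limiting behaviour at $\gamma=2$.

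A pleasant structural feature is that the two hypotheses are tailored to the two integer endpoints: $\mathcal{Y}_1>0$ controls $\gamma=1$ (positivity of the conformal Laplacian $P_2$), and $\mathcal{Y}_2>0$ is exactly the positivity of the energy of the Paneitz operator $P_4$, hence $\lambda_1(P_4)>0$ at $\gamma=2$. So one already knows positivity at both endpoints of $[1,2]$, and the problem is to propagate it across the open interval. I see two candidate routes. The first is an energy-extension argument in the spirit of the higher-order Caffarelli--Silvestre extension of Case--Chang and Chang--Gonz\'{a}lez: for $\gamma\in(1,2)$ one writes $\langle f,P_{2\gamma}f\rangle_M$ as a weighted fourth-order Dirichlet energy of an extension $U$ into $X$, plus boundary contributions, and tries to deduce positivity of this energy from the positivity of the boundary operators $P_2$ and $P_4$. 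The second is an analytic-continuation/convexity route: the explicit hyperbolic eigenvalues $\Gamma(k+\frac{n}{2}+\gamma)/\Gamma(k+\frac{n}{2}-\gamma)$ are log-convex and strictly positive in $\gamma$, which suggests proving that $\gamma\mapsto\lambda_1(P_{2\gamma})$ admits no interior sign change (e.g. a log-convexity or monotonicity coming from the complete-monotonicity structure of $S(s)$ in $s$), so that positivity at the endpoints $\gamma\downarrow0$ and $\gamma=2$ rules out a zero in $(1,2)$.

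In either approach the work of Gursky--Lin \cite{GL1} should enter at the level of choosing a good representative: from $\mathcal{Y}_1>0$ and $\mathcal{Y}_2>0$ one expects a conformal metric in $[g]$ with $R>0$ and with a positive (or at least nonnegative) $Q_4$, i.e. the conformally invariant conditions should be convertible into the pointwise curvature conditions that feed the energy estimate. This is precisely the bridge from the invariant hypotheses of Conjecture~\ref{conj.1} to the curvature hypotheses of Theorem~\ref{thm.1}. Once such a representative is fixed, one would try to run the $\gamma\in(1,2)$ positivity argument of Theorem~\ref{thm.1}, concluding $\lambda_1(P_{2\gamma})>0$ on the whole interval, and then invoke the reduction above, with the endpoint positivity at $\gamma=2$ (from $\mathcal{Y}_2>0$) upgrading $s_0\leq\frac{n}{2}-2$ to the strict inequality $s_0<\frac{n}{2}-2$.

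The main obstacle is the interior-geometry coupling that Theorem~\ref{thm.1} avoids by fiat. For $\gamma\in(0,1)$ the positivity reduces to a weighted Hardy--Poincar\'{e} inequality in which the \emph{only} curvature input at the boundary is the scalar curvature, which is why $\mathcal{Y}_1>0$ alone suffices. For $\gamma\in(1,2)$ the fourth-order extension energy genuinely couples the full interior curvature of $g_+$ with the boundary Paneitz data, and the conformally invariant boundary quantities $\mathcal{Y}_1,\mathcal{Y}_2$ do not manifestly dominate the interior cross terms. Concretely, the three simplifications built into Theorem~\ref{thm.1} each correspond to a genuine difficulty: dropping local conformal flatness reintroduces Weyl/ambient-curvature contributions in the interior energy; dropping the constant-scalar-curvature normalisation reintroduces gradient terms $|\nabla R|$ that must be absorbed; and replacing $Q_4\geq0$ by the invariant $\mathcal{Y}_2>0$ requires a conformal-covariance argument that transfers the energy positivity from the distinguished representative to the geometric quantity. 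The decisive step, and the one I expect to be hardest, is establishing a Bochner-type positivity estimate that bounds the interior coupling terms of the fourth-order energy purely in terms of $\mathcal{Y}_1$ and $\mathcal{Y}_2$; this is exactly the point at which the invariant hypotheses must be shown to carry the same weight as the pointwise conditions.
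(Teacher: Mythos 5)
The statement you set out to prove is Conjecture \ref{conj.1}, which the paper explicitly leaves \emph{open}: there is no proof of it anywhere in the paper, only the motivating remark that it is "based on the work of Gursky--Lin" and the strictly weaker Theorem \ref{thm.1}, whose proof requires three extra hypotheses — local conformal flatness of the bulk $(X^{n+1},g_+)$, a representative with \emph{constant} positive scalar curvature, and $Q_4\geq 0$ for that same representative. So there is no proof in the paper to compare yours against, and your proposal must be judged on its own terms. On those terms it is a research program rather than a proof: you yourself defer the "decisive step," and the reduction-plus-bridge structure you describe is essentially the author's own motivation for stating this as a conjecture rather than a theorem.

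The unresolved gaps are concrete. First, the Gursky--Lin bridge: even granting that $\mathcal{Y}_1>0$ and $\mathcal{Y}_2>0$ yield a representative with $R>0$ and $Q_4\geq 0$ pointwise, Theorem \ref{thm.1} needs $R$ to be a positive \emph{constant} simultaneously with $Q_4\geq0$; solving the Yamabe problem to make $R$ constant destroys all control of $Q_4$ for that representative. This is exactly the "technical requirement" the author flags and hopes, but does not know how, to remove. Second, and more fundamentally, the proof of Theorem \ref{thm.1} uses local conformal flatness of the \emph{interior}: in Lemma \ref{lem.4} the term $2W^+_{ikjl}v^{ij}v^{kl}$ must vanish for the inequality $\triangle_+II+2(n+2)II\leq 0$, and hence the maximum-principle cascade $II\leq 0\Rightarrow I<0\Rightarrow u\geq\psi$, to close. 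No hypothesis on the boundary conformal class, however invariant, can force a bulk condition like $W^+=0$, so "run the $\gamma\in(1,2)$ argument of Theorem \ref{thm.1}" is simply not available under the conjecture's hypotheses; one would need a genuinely new way to control the Weyl contribution, which is the heart of the open problem. Third, your fallback route — that $\gamma\mapsto\lambda_1(P_{2\gamma})$ admits no interior sign change because of endpoint positivity at $\gamma\downarrow 0$ and $\gamma=2$ — has no known justification: log-convexity of the explicit eigenvalues $\Gamma(k+\frac{n}{2}+\gamma)/\Gamma(k+\frac{n}{2}-\gamma)$ holds on the hyperbolic model but does not transfer to a general Poincar\'{e}--Einstein filling, and positivity at two endpoints by itself excludes nothing in between. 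Until at least one of these three gaps is filled, the statement remains a conjecture, which is precisely how the paper presents it.
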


\vspace{0.2in}

\section{Asymptotic computations}
Supoose  $(X^{n+1},g_+)$ is a Poincar\'e-Einstein manifold with conformal infinity $(M,[g])$ and $n\geq 5$.  In this section we do some asymptotic computations for the Poinar\'{e}-Einstein manifolds. Fix a representative $g$ for the conformal infinity and choose $x$ to be the geodesic normal defining function, i.e.  $|dx|^2_{x^2g_+}=1$ in a neighbourhood of the boundary and $x^2g|_{TM}=g$. Then due to \cite{CDLS}, $g_+$ has a Taylor expansion near the boundary, i.e.
$$
\begin{gathered}
g_+=\frac{1}{x^2}\left(dx^2+g_0+x^2g_2+x^4g_4+O(x^{5})\right),\quad \mathrm{where}\quad\\
g_0=g,\quad
[g_2]_{ij}=- A_{ij},\quad
[g_4]_{ij}=\frac{1}{4(n-4)}(-B_{ij}+(n-4)A_i^k A_{jk}).
\end{gathered}
$$
Here $A$ is the Schouten tensor on $(M, g)$: 
$$
\begin{gathered}
A_{ij}=\frac{1}{n-2}\left( R_{ij}-Jg_{ij}\right),\quad 
J=\frac{R}{2(n-1)}=Q_2;
\end{gathered}
$$
and 
$$
B_{ij}=C_{ijk,}^{\ \ \ \ k}-A^{kl}W_{kijl}; \quad
C_{ijk}=A_{ij,k}-A_{ik,j}. 
$$
In this paper, we use "$,$" to denote the covariant derivatives w.r.t. boundary metric $g$ and "$:$"   the covariant derivatives w.r.t. interior metric $g_+$. 
Then the Laplacian of $g_+$,  denoted by $\triangle_+$, also has an expansion near the boundary, i.e. 
$$
\triangle_{+}=-(x\partial_x)^2+nx\partial_x + x^2L_2+x^4L_4+O(x^5)
$$
where
$$
\begin{gathered}
L_2=\triangle_{g}+Jx\partial_x, \\
L_4=\delta_g Ad +\frac{1}{2}(-\delta_g Jd+J\triangle_{g})+\frac{1}{2}|A|_g^2x\partial_x
\end{gathered}
$$
We also denote
$$
\begin{gathered}
L_2(s)=\triangle_{g}+sJ, 
\\
L_4(s)=\delta_g Ad +\frac{1}{2}(-\delta_g Jd+J\triangle_{g})+\frac{1}{2}s|A|_g^2
\end{gathered}
$$
For $f\in C^{\infty}(M)$ and $\mathrm{Re}(s)>\frac{n}{2}$, $2s-n\notin \mathbb{N}$ and $s(n-s)\notin \sigma_{pp}({\triangle_+})$, consider the equation
$$
\begin{gathered}
\triangle_{g_+}u-s(n-s)u=0, \quad x^{s-n}u|_{M}=f.
\end{gathered}
$$
Then $u=x^{n-s} F+x^sG$ with $F, G\in C^{\infty}(\overline{X})$. Moreover, $F$ has asymptotical expansion
$$
\begin{gathered}
F=f-\frac{x^2}{2(2s-n-2)}T_2(n-s)f+\frac{x^4}{8(2s-n-2)(2s-n-4)}T_4(n-s)f+O(x^5),
\end{gathered}
$$
and $G=S(s)f+O(x^2)$. 
Here
$$
\begin{aligned}
T_2(n-s)=&\ L_2(n-s),\\
T_4(n-s)=&\ L_2(n-s+2)L_2(n-s)-2(2s-n-2)L_4(n-s). 
\end{aligned}
$$
By direct computation, 
$$
\begin{gathered}
T_2(n-s)=P_2+\frac{n+2-2s}{2}J, 
\\
T_4(n-s)= P_4+ \frac{n+4-2s}{2}\left(2J\triangle_{g}+4\delta_{g} Ad +Q_4+(n-s)(J^2+2|A|_{g}^2)\right). 
\end{gathered}
$$
If $s=\frac{n}{2}+1$, then
$$
T_2\left(\frac{n}{2}-1\right)=P_2=\triangle_g +\frac{n-2}{2}J. 
$$
If $s=\frac{n}{2}+2$, then
$$
\begin{gathered}
T_4\left(\frac{n}{2}-2\right)=P_4=\triangle_g^2+ \delta_g\left((n-2)J-4A\right)d+\frac{n-4}{2}Q_4. 
\end{gathered}
$$

\vspace{0.2in}
\section{The positivity of $P_{2\gamma}$ for $\gamma\in (1,2)$}
We mainly prove Theorem \ref{thm.1} in this section. The positivity of fractional GJMS operators is studied carefully in Section 7 of \cite{CC1}. Combine their results with the spectrum theorem in \cite{Le1}, we first know that

\begin{proposition}[Case-Chang, Lee]\label{prop.2.1}
Let $(X^{n+1}, g_+)$ ($n\geq 4$)  be a Poinare\'{e}-Einstein manifold with conformal infinity $(M, [g])$. Fix a representative $g$ for the conformal infinity. Assume the scalar curvature $R>0$ and $Q_{2\gamma}>0$ for some $\gamma\in (1,2)$. Then
\begin{itemize}
\item[(a)] There is no $L^2$-eigenvalue for $\triangle_+$, i.e.   $\mathrm{spec}(\triangle_+)=[n^2/4,\infty)$;
\item[(b)] The first eigenvalue of $P_{2\gamma}$ satisfies $\lambda_1(P_{2\gamma})\geq \min_{M}Q_{2\gamma}>0$;
\item[(c)] $P_{2\gamma}$ satisfies strong maximum principle: if $P_{2\gamma}f\geq 0$ for $f\in C^{\infty}(M)$, then $f>0$ or $f\equiv 0$;
\item[(d)] The Green's function of $P_{2\gamma}$ is positive. 
\end{itemize}
\end{proposition}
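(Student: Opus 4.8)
The plan is to handle (a) separately from (b)--(d): statement (a) is purely spectral and is Lee's theorem, while (b)--(d) are obtained by transferring the positivity of the bulk resolvent to the boundary operator, in the spirit of [CC1, \S7]. First I would dispose of (a). Since the fixed representative $g$ has $R>0$ everywhere, the conformal Laplacian $P_2=\triangle_g+\frac{n-2}{4(n-1)}R$ is a positive operator, so its first eigenvalue is positive and $(M,[g])$ is of positive, hence nonnegative, Yamabe type. Lee's theorem [Le1] then gives $\sigma_{pp}(\triangle_+)=\emptyset$; combined with the decomposition $\mathrm{Spec}(\triangle_+)=\sigma_{pp}(\triangle_+)\cup\sigma_{ac}(\triangle_+)$ and $\sigma_{ac}(\triangle_+)=[\frac{n^2}{4},\infty)$ recalled in the introduction, this yields $\mathrm{Spec}(\triangle_+)=[\frac{n^2}{4},\infty)$.

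The engine for (b)--(d) is the observation that, with $s=\frac n2+\gamma$ and $\gamma\in(1,2)$, the spectral value $s(n-s)=\frac{n^2}{4}-\gamma^2$ sits strictly below $\inf\mathrm{Spec}(\triangle_+)=\frac{n^2}{4}$, by part (a). Consequently $\triangle_+-s(n-s)$ is coercive and invertible on $L^2(dV_{g_+})$, its resolvent $R(s)$ is a genuine positive operator, and the equation obeys a maximum principle: a bounded $U$ on $X$ with $(\triangle_+-s(n-s))U\ge 0$ and nonnegative boundary data is nonnegative, with the Schwartz kernel of $R(s)$ strictly positive. This is exactly the point at which the hypothesis $R>0$, through (a), is used.

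Next I would derive (c) and (d). The Poisson extension $U$ solving $(\triangle_+-s(n-s))U=0$, $x^{s-n}U|_M=f$, reads off $S(s)f$ from the $x^s$-coefficient of $U$, and $P_{2\gamma}=d_\gamma S(\frac n2+\gamma)$ with $d_\gamma=2^{2\gamma}\Gamma(\gamma)/\Gamma(-\gamma)>0$ for $\gamma\in(1,2)$, so $P_{2\gamma}$ and $S(s)$ share their positivity. Following [CC1, \S7], the positivity of the bulk resolvent is passed to the boundary operator: the interior maximum principle together with a Hopf-type boundary estimate for the (higher-order) Case--Chang extension gives that $P_{2\gamma}f\ge 0$ forces $f>0$ or $f\equiv 0$ (statement (c)), while the strict positivity of the resolvent kernel produces a positive Green's function for $P_{2\gamma}$ (statement (d)). Here the second hypothesis $Q_{2\gamma}>0$ enters, supplying the positive supersolution $u\equiv 1$: since $n-2\gamma>0$, one has $P_{2\gamma}1=\frac{n-2\gamma}{2}Q_{2\gamma}>0$, which anchors the comparison arguments near the boundary.

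Finally, for (b): once (c)--(d) furnish the strong maximum principle and a positive Green's function, the Krein--Rutman theorem makes $\lambda_1(P_{2\gamma})$ simple with a positive eigenfunction. Comparing against $u\equiv 1$, for which $\bigl(P_{2\gamma}-\frac{n-2\gamma}{2}\min_M Q_{2\gamma}\bigr)1=\frac{n-2\gamma}{2}\bigl(Q_{2\gamma}-\min_M Q_{2\gamma}\bigr)\ge 0$, the shifted operator still admits a positive supersolution, and the maximum principle then forces its principal eigenvalue to be nonnegative, giving the positive lower bound on $\lambda_1(P_{2\gamma})$ in terms of $\min_M Q_{2\gamma}$ asserted in (b). The hard part, I expect, is the maximum principle in the range $\gamma\in(1,2)$ itself: unlike the case $\gamma\in(0,1)$, where the extension is a single second-order degenerate-elliptic equation to which the classical weak and strong maximum principles and the Hopf lemma apply directly, here $P_{2\gamma}$ has order $2\gamma\in(2,4)$ and is not, as a boundary operator, of a type for which a maximum principle is automatic. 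The way around this is to argue through the second-order bulk operator $\triangle_+-s(n-s)$ and its positive resolvent rather than through $P_{2\gamma}$ directly, and the two inputs that make this transfer legitimate are precisely that $s(n-s)$ lies below the spectrum (from $R>0$) and that a positive supersolution exists (from $Q_{2\gamma}>0$).
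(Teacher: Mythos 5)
Your proposal follows essentially the same route as the paper, which offers no independent argument here but simply derives (a) from Lee's theorem \cite{Le1} (via the positive-scalar-curvature representative giving nonnegative Yamabe type) and quotes Section 7 of \cite{CC1} for (b)--(d), with the two hypotheses entering exactly as you describe: $R>0$ pushes $s(n-s)$ below the spectrum and $Q_{2\gamma}>0$ supplies the positive supersolution $P_{2\gamma}1=\frac{n-2\gamma}{2}Q_{2\gamma}>0$. Your reconstruction of the Case--Chang transfer argument is consistent with that reference, so the proposal is correct and matches the paper's (citation-level) proof.
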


Therefore, to prove Theorem \ref{thm.1}, we only need to prove part $(a)$ and part $(e)$. Here we work out a comparison theorem similar as Guillarmou-Qing did in \cite{GQ1}. 

For $\gamma\in(1,2)$, set $s=\frac{n}{2}+\gamma$  and   let $u$ solves the following equation:
\begin{equation}\label{eq.1}
\triangle_+u-s(n-s)u=0, \quad x^{s-n}u|_{M}=1.
\end{equation}
Then $u$ is positive on $X$  and near the  boundary $u$ has an asymptotic expansion as follows:
$$
\begin{gathered}
u=x^{n-s}\left(1+x^2u_2+x^{2\gamma}u_{2\gamma}+x^4u_4+O(x^{5})\right), 
\end{gathered}
$$
where
$$
\begin{gathered}
u_{2\gamma}=S(s)1=2^{-2\gamma}\frac{\Gamma(-\gamma)}{\Gamma(\gamma)} \frac{n-2\gamma}{2}Q_{2\gamma}, \\
u_2=-\frac{(n-s)}{2(2s-n-2)}J< 0, \\
u_4=\frac{(n-s)}{8(2s-n-2)(2s-n-4)}\left(Q_4+\frac{n+4-2s}{2}\left(J^2+2|A|_g^2\right)\right)< 0. 
\end{gathered}
$$
We also define a test function $\psi$ by
\begin{equation}\label{eq.2}
\psi=v^{\frac{n-s}{n-\lambda}} 
\quad\mathrm{where}\quad
\begin{cases}
\triangle_+v-\lambda(n-\lambda)v=w, & x^{\lambda-n}v|_{M}=1;\\
\triangle_+w-(\lambda-2)(n-\lambda+2)w=0, & x^{\lambda+2-n}w|_{M}=w_0. 
\end{cases}
\end{equation}
Here $\lambda>\max\{s+2, n\}$ will be fixed later  and
$$
w_0=\frac{-2(\lambda-s)(n-\lambda)}{2s-n-2}J>0. 
$$
So by maximum principle, $w>0$,  $v>0$ and hence $\psi>0$  on $X$. 

\begin{lemma} Near the boundary $M$, $\psi$ has an asymptotic expansion 
$$
\psi=x^{n-s}\left(1+\psi_2x^2+\psi_4x^4+O(x^{5})\right), 
$$
sastisfying
$$
\psi_2=u_2. 
$$
Assume $J>0$ and $Q_4\geq 0$, then
$$
\psi_4>u_4. 
$$
\end{lemma}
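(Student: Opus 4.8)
The plan is to compute the boundary expansions of $v$ and $w$ directly from the operator expansion $\triangle_+=-(x\partial_x)^2+nx\partial_x+x^2L_2+x^4L_4+O(x^5)$, transport them to $\psi=v^{(n-s)/(n-\lambda)}$ by a binomial expansion, and then compare the resulting coefficients against $u_2,u_4$. First I would set up the indicial recursion: writing $v=x^{n-\lambda}(1+V_2x^2+V_4x^4+\cdots)$ and $w=x^{n-\lambda+2}(w_0+W_2x^2+\cdots)$ and substituting into \eqref{eq.2}, the coefficient of $x^{n-\lambda+2m}$ gives
\[
2m(2\lambda-n-2m)V_{2m}+L_2(n-\lambda+2m-2)V_{2m-2}+L_4(n-\lambda+2m-4)V_{2m-4}=W_{2m-2},
\]
together with the analogous homogeneous recursion for $w$. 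The hypothesis that $R$, hence $J=\tfrac{R}{2(n-1)}$, is constant enters here in an essential way: then $w_0,V_2,W_2$ are all constant, every $\triangle_g$ acting on them drops out, and $L_4(\cdot)$ applied to a constant collapses to $\tfrac12(\cdot)|A|_g^2$. Solving at $m=1,2$ yields closed forms for $V_2,W_2,V_4$ in which the only function-valued quantity is $|A|_g^2$. Since $\lambda>\tfrac n2+2$, the scattering terms $x^\lambda,x^{\lambda-2}$ and any logarithmic terms sit at relative order $x^{2\lambda-n}$, well beyond $x^4$, so they do not affect the $x^2,x^4$ coefficients; in particular $\psi$ carries no fractional $x^{2\gamma}$ term, which is why its expansion takes the stated form.

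Setting $\beta=(n-s)/(n-\lambda)$, the identity $(n-\lambda)\beta=n-s$ produces the leading power $x^{n-s}$, and expanding $(1+V_2x^2+V_4x^4+\cdots)^\beta$ gives $\psi_2=\beta V_2$ and $\psi_4=\beta V_4+\tfrac12\beta(\beta-1)V_2^2$. Substituting the prescribed value of $w_0$ collapses $V_2$ to $-\tfrac{(n-\lambda)J}{2(2s-n-2)}$, whence $\psi_2=\beta V_2=-\tfrac{(n-s)J}{2(2s-n-2)}=u_2$; this is precisely the algebraic reason for the choice of $w_0$.

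The substance is the inequality $\psi_4>u_4$. Using that the factors of $(n-\lambda)$ cancel in $\tfrac12\beta(\beta-1)V_2^2=\tfrac{(n-s)(\lambda-s)}{8(2s-n-2)^2}J^2$, I would organize the difference as a pointwise combination
\[
\psi_4-u_4=C_{J^2}\,J^2+C_{|A|^2}\,|A|_g^2+C_{Q_4}\,Q_4,
\]
where $Q_4$ enters only through $u_4$. One checks directly that $C_{Q_4}=-\tfrac{n-s}{8(2s-n-2)(2s-n-4)}>0$, since $2s-n-4=2\gamma-4<0$, and that $C_{|A|^2}=\tfrac{n-s}{8}\bigl(\tfrac{1}{2s-n-2}-\tfrac{1}{2\lambda-n-4}\bigr)>0$ for every admissible $\lambda$ by $\lambda>s+2$. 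The delicate coefficient is $C_{J^2}$: its $\lambda$-independent part is negative, but the binomial cross term $\tfrac{(n-s)(\lambda-s)}{8(2s-n-2)^2}J^2$ grows linearly in $\lambda$, so $C_{J^2}\to+\infty$ as $\lambda\to\infty$.

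This last point is the main obstacle, and I expect it to be resolved by fixing $\lambda$ large enough—permissible since the construction only requires $\lambda>\max\{s+2,n\}$—so that $C_{J^2}>0$. With all three coefficients positive and $J>0$, $|A|_g^2\ge0$, $Q_4\ge0$, one then concludes $\psi_4-u_4\ge C_{J^2}J^2>0$ pointwise, the strictness coming from the $J^2$ term alone. (The same hypothesis $J>0$ is what makes $w_0>0$, so that $w,v,\psi>0$ by the maximum principle, which is needed for $\psi$ to serve as a comparison function.)
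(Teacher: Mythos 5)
Your overall route is the same as the paper's: solve the indicial recursion for $v$ and $w$, expand $\psi=v^{\beta}$ with $\beta=\frac{n-s}{n-\lambda}$ to get $\psi_2=\beta v_2$ and $\psi_4=\beta v_4+\tfrac12\beta(\beta-1)v_2^2$, note that the choice of $w_0$ forces $\psi_2=u_2$, and then compare fourth-order coefficients. Your recursion, the remark that the $x^{\lambda}$-scattering and log terms sit beyond order $x^4$, and the identifications of $C_{Q_4}$ and $C_{|A|^2}$ all check out. But there are two genuine problems. First, the decisive inequality $C_{J^2}>0$ is exactly the content of the lemma and you do not establish it: you only observe that $C_{J^2}\to+\infty$ as $\lambda\to\infty$ and ``expect'' to resolve the sign by taking $\lambda$ large. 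Besides being unverified, that fix is at odds with how the lemma is used: the paper immediately afterwards fixes $\lambda=n+2$ (so that $w$ is constant and $v^{-2}=O(x^4)$ in the global maximum-principle argument), so you need $\psi_4>u_4$ at that specific $\lambda$, not merely for $\lambda\gg0$. The paper avoids the issue entirely by grouping the difference as
$$
\psi_4-u_4=\frac{(n-s)(\lambda-s)}{4(2\lambda-n-4)(2s-n-2)}\left(\frac{1}{2\lambda-n-6}-\frac{1}{2s-n-4}\right)Q_4
+\frac{(n-s)(\lambda-s)}{4(2s-n-2)^2(2\lambda-n-6)}\left[(2s-n-2)|A|_g^2+(\lambda-s-2)J^2\right],
$$
in which every factor is manifestly positive for \emph{every} admissible $\lambda>\max\{s+2,n\}$ (using $2s-n-2>0$, $2s-n-4<0$, $2\lambda-n-6>0$). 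In other words, letting the $Q_4$-coefficient be $\lambda$-dependent (rather than pinning it to the $\lambda$-independent value coming from $u_4$ alone) makes the sign of the residual $J^2$-coefficient transparent; your normalization pushes the difficulty into $C_{J^2}$ and then leaves it unresolved.

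Second, you assert that the constancy of $R$ ``enters here in an essential way.'' It does not, and it is not a hypothesis of this lemma (only $J>0$ and $Q_4\ge0$ are assumed; constancy is used later, for the global estimate on $\tfrac{\triangle_+\psi}{\psi}$). The paper's computation keeps $\triangle_g J$ in $w_2$ and $v_4$, and these terms recombine with the $\triangle_g J$ hidden in $u_4$'s $Q_4$ to produce exactly the $Q_4$-term displayed above. By setting $\triangle_g J=0$ you prove only a special case of the stated lemma; that happens to suffice for Theorem \ref{thm.1}, but the claim that constancy is essential is incorrect, and dropping it costs you nothing once the terms are grouped as the paper does.
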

\begin{proof}
Here $v, w$ have asymptotic expansions as follows
$$
\begin{aligned}
&v=x^{n-\lambda}\left(1+v_2x^2+v_4x^4+O(x^{5})\right),
\\ 
&w=x^{n-\lambda+2}\left(w_0+w_2x^2+O(x^{3})\right). 
\end{aligned}
$$
By direct computation
$$
\begin{gathered}
w_2=\frac{(\lambda-s)(n-\lambda)}{(2\lambda-n-6)(2s-n-2)}\left(\triangle_gJ+(n-\lambda+2)J^2\right),\\
v_2=-\frac{(n-\lambda)}{2(2s-n-2)}J,\\
v_4=\frac{1}{4(2\lambda-n-4)}\left[\left(\frac{\lambda-s}{2\lambda-n-6}+\frac{1}{2}\right)\frac{n-\lambda}{2s-n-2}\left(\triangle_g J+(n-\lambda+2)J^2\right)-\frac{n-\lambda}{2}|A|^2\right].
\end{gathered}
$$
This implies 
$$
\begin{gathered}
\psi_2=-\frac{(n-s)}{2(2s-n-2)}J,\\
\psi_4=\frac{n-s}{n-\lambda}v_4+\frac{(n-s)(\lambda-s)}{2(n-\lambda)^2}v_2^2.
\end{gathered}
$$
So $\psi_2=u_2$ and 
$$
\begin{gathered}
\begin{aligned}
\psi_4-u_4=&\ \frac{(n-s)(\lambda-s)}{4}\left[\frac{Q_4}{(2\lambda-n-4)(2s-n-2)}\left(\frac{1}{2\lambda-n-6}-\frac{1}{2s-n-4}\right)\right]\\
&+ \frac{(n-s)(\lambda-s)}{4(2s-n-2)^2(2\lambda-n-6)}\left[(2s-n-2)|A|_g^2+(\lambda-s-2)J^2\right]. 
\end{aligned}
\end{gathered}
$$
Since $\lambda>s+2$,  $\lambda>n>s$ and $J>0$, $Q_4\geq 0$, we have $\psi_4-u_4>0$.   
\end{proof}

For the global behaviour of $\psi$,  a direct computation gives
$$
\begin{aligned}
\frac{\triangle_+\psi}{\psi}-s(n-s)
&=(n-s)\left[(\lambda-s)\left(1-\frac{1}{(n-\lambda)^2}v^{-2}|\nabla v|_+^{2}\right)+\frac{1}{n-\lambda}v^{-1}w\right]
\\
&=(n-s)Kx^4+O(x^{5}),
\end{aligned}
$$
where
$$
\begin{aligned}
K=&\ -(\lambda-s)\left[
\frac{1}{(2s-n-2)^2}J^2-\frac{1}{2\lambda-n-4}|A|_g^2\right.
\\
&\ \left. +
\frac{2(\lambda-s-1)}{(2\lambda-n-4)(2\lambda-n-6)(2s-n-2)}\big(\triangle_g J+(n-\lambda+2)J^2\big)\right]
\\
=&\  -(\lambda-s)\left[ \frac{(n+4-2s)}{(2\lambda-n-6)(2s-n-2)} \left((\lambda-s-2)J^2+|A|_g^2\right)\right.
\\
&\ + \left. \frac{2(\lambda-s-1)}{(2\lambda-n-4)(2\lambda-n-6)(2s-n-2)}Q_4
\right].
\end{aligned}
$$

\begin{lemma}\label{lem.2}
Assume  $\lambda>s+2$, $\lambda>n>s$ and $J>0$, $Q_4\geq 0$ on $M$, then $K<0$. 
\end{lemma}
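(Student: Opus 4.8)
The plan is to read off the sign of $K$ directly from the second (factored) expression displayed just above the statement, which reduces the entire claim to a sign chase on a few linear factors. The first step is to trust (after re-deriving) the algebraic identity that rewrites the original bracketed form of $K$ in the factored form; once that identity is in hand, every ingredient of $K$ is a product or quotient of quantities whose sign is fixed by the hypotheses. The crucial translation is to use $s=\frac{n}{2}+\gamma$ with $\gamma\in(1,2)$, which converts the two ``dangerous'' factors into $2s-n-2=2(\gamma-1)>0$ and $n+4-2s=2(2-\gamma)>0$.

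Next I would record the sign of each factor under the assumptions $\lambda>s+2$, $\lambda>n>s$, $J>0$, $Q_4\geq 0$. From $\lambda>s+2$ one gets $\lambda-s>2>0$, $\lambda-s-2>0$, and $\lambda-s-1>1>0$; moreover $2\lambda>2s+4=n+2\gamma+4$ yields $2\lambda-n-6>2(\gamma-1)>0$ and hence $2\lambda-n-4>2\lambda-n-6>0$. Together with $2s-n-2>0$ and $n+4-2s>0$ noted above, this shows that all denominators appearing in $K$ are strictly positive, so no sign flip occurs. Finally $J>0$ gives $(\lambda-s-2)J^2+|A|_g^2>0$ (strictly, since the $J^2$ term is positive), while $Q_4\geq 0$ keeps the second summand nonnegative.

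Assembling these, the bracket multiplying $-(\lambda-s)$ is a sum of a strictly positive term and a nonnegative term, hence strictly positive; since $\lambda-s>0$, I conclude $K<0$. I would emphasize that the strictness of $K<0$ comes entirely from the $J^2$ contribution and therefore survives even when $Q_4\equiv 0$ and $A\equiv 0$, so the hypothesis $Q_4\geq 0$ (rather than $Q_4>0$) suffices here.

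There is no genuine analytic obstacle in this lemma; the only real point requiring care is the verification that the two factors $2s-n-2$ and $2\lambda-n-6$ are \emph{strictly positive} rather than merely nonzero, since a sign error there would reverse the conclusion. This is exactly where $\gamma>1$ and $\lambda>s+2$ are used, and $n+4-2s>0$ (which forces the first summand to be positive rather than negative) is where $\gamma<2$ enters. The genuinely computational heart lies earlier, in establishing the factored form of $K$ from its raw expression; once that identity is confirmed, the present lemma is immediate.
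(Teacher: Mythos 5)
Your proof is correct and is exactly the sign-chase that the paper intends (the paper in fact omits any written proof of this lemma, treating it as immediate from the factored expression for $K$ displayed just above it). Your verification that $2s-n-2=2(\gamma-1)>0$, $n+4-2s=2(2-\gamma)>0$, and $2\lambda-n-6>2(\gamma-1)>0$ under $\lambda>s+2$ supplies precisely the details the paper leaves to the reader, and correctly identifies the $J^2$ term as the source of strictness.
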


Next take $\lambda=n+2$ in (\ref{eq.2}) to simplify the computations. In this case, $v^{-2}=O(x^4)$. Denote
$$
\begin{gathered}
\frac{\triangle_+\psi}{\psi}-s(n-s)=(n-s)v^{-2}I, 
\end{gathered}
$$
where
\begin{equation}\label{eq.3}
I=(n+2-s)\left(v^2-\frac{1}{4}|\nabla v|_+^{2}\right)-\frac{1}{2}(vw).
\end{equation}
Obviously $I|_{M}=K$.

\begin{lemma} \label{lem.3}
For $I$ defined in (\ref{eq.3})
we have
$$
\begin{aligned}
\triangle_+I=&(n+2-s)\left(\frac{1}{2}|\nabla^2v|^2_+ +\frac{n}{2}|\nabla v|_+^2-4(n+2)v^2\right)
\\ &
  -\frac{(n-s)}{2}\langle \nabla v, \nabla w\rangle_+   +[3(n+2)-2s]vw-\frac{1}{2}w^2. 
\end{aligned}
$$
\end{lemma}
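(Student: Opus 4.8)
The plan is to reduce $\triangle_+ I$ to a short list of standard identities and then collect coefficients. First I would record the two consequences of the choice $\lambda=n+2$ made just before \eqref{eq.3}: since $\lambda(n-\lambda)=-2(n+2)$ and $(\lambda-2)(n-\lambda+2)=n\cdot 0=0$, the defining system \eqref{eq.2} collapses to
\[
\triangle_+ v = w-2(n+2)v,\qquad \triangle_+ w = 0,
\]
so that in particular $w$ is $g_+$-harmonic. These are exactly the identities that make $\lambda=n+2$ convenient, and they will be substituted wherever $\triangle_+v$ or $\triangle_+w$ occurs.

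Second, I would assemble the two pointwise calculus identities for the (positive) Laplacian $\triangle_+=\delta_{g_+}d$: the product rule $\triangle_+(fg)=(\triangle_+f)g+f(\triangle_+g)-2\langle\nabla f,\nabla g\rangle_+$, and the Bochner formula, which on the Poincar\'e--Einstein background $Ric_{g_+}=-n g_+$ takes the clean form
\[
\tfrac12\triangle_+|\nabla v|_+^2 = -|\nabla^2 v|_+^2 + \langle\nabla v,\nabla(\triangle_+v)\rangle_+ + n|\nabla v|_+^2 .
\]
The Einstein condition is the only place where the geometry of $(X,g_+)$ enters: it is what turns the Ricci term $-Ric_{g_+}(\nabla v,\nabla v)$ into the summand $+n|\nabla v|_+^2$.

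Third, I would compute the three building blocks separately, feeding the equations for $v$ and $w$ into the two identities above. I expect to obtain
\[
\triangle_+(v^2)=2vw-4(n+2)v^2-2|\nabla v|_+^2,
\]
\[
\triangle_+|\nabla v|_+^2=-2|\nabla^2 v|_+^2+2\langle\nabla v,\nabla w\rangle_+-2(n+4)|\nabla v|_+^2,
\]
\[
\triangle_+(vw)=w^2-2(n+2)vw-2\langle\nabla v,\nabla w\rangle_+,
\]
the harmonicity $\triangle_+w=0$ killing the term $v\triangle_+w$ in the last line. Since $I=(n+2-s)\bigl(v^2-\tfrac14|\nabla v|_+^2\bigr)-\tfrac12 vw$ by \eqref{eq.3}, linearity gives $\triangle_+I=(n+2-s)\bigl(\triangle_+(v^2)-\tfrac14\triangle_+|\nabla v|_+^2\bigr)-\tfrac12\triangle_+(vw)$, and substituting the three expressions yields the claimed formula after grouping the $|\nabla^2v|_+^2$, $|\nabla v|_+^2$, $v^2$, $vw$, $\langle\nabla v,\nabla w\rangle_+$ and $w^2$ terms.

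There is no conceptual obstacle here; the work is entirely bookkeeping, and the main risk lies in two sign-sensitive cancellations. In combining the $|\nabla v|_+^2$ contributions, $-2$ from $\triangle_+(v^2)$ must cancel against $+\tfrac{n+4}{2}$ from $-\tfrac14\triangle_+|\nabla v|_+^2$ to leave $\tfrac n2$; the coefficient of $vw$ must add $2(n+2-s)$ to $(n+2)$ to give $3(n+2)-2s$; and the coefficient of $\langle\nabla v,\nabla w\rangle_+$ must add $-\tfrac12(n+2-s)$ to $+1$ to give $-\tfrac{n-s}{2}$. Getting the Bochner sign and these three recombinations correct is precisely where care is needed.
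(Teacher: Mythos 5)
Your proposal is correct and follows essentially the same route as the paper: the paper's index computation of $(|\nabla v|^2_+)_{:k}^{\ \ k}$ via the commutation $v_{:ik}^{\ \ \ k}=v_{:\ ki}^{\ k}+R^+_{ik}v^k$ is exactly the Bochner identity you invoke, and your three building blocks $\triangle_+(v^2)$, $\triangle_+|\nabla v|^2_+$, $\triangle_+(vw)$ together with the final coefficient collection reproduce the paper's calculation. All the sign-sensitive recombinations you flag come out as you state, so the argument is complete.
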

\begin{proof}
Here on $X^{n+1}$, $R^+_{ij}=-n[g_+]_{ij}$ and $v, w$ satisfy
$$
\begin{aligned}
& v_{:k}^{\ \ k}=2(n+2)v-w, 
\\
& w_{:k}^{\ \ k}=0.
\end{aligned}
$$
Here the $:$ denotes the covariant derivative w.r.t. $g_+$.
Hence 
$$
\begin{aligned}
& v_{:ik}^{\ \ \ k}=v^{\ k}_{:\ ki}+R^+_{ik}v^{k}=(n+4)v_i -w_i, 
\\
& w_{:ik}^{\ \ \ k}=w^{\ k}_{:\ ki}+R^+_{ik}w^k=-nw_i. 
\end{aligned}
$$
Direct computation shows that
$$
(v^2)_{:k}^{\ \ k}=2\big(v^kv_k+vv_{:k}^{\ \ k}\big)=2\big(|\nabla v|^2_+  +v[2(n+2)v-w]\big),
$$
$$
(|\nabla v|^2_+)_{:k}^{\ \ k} =2\big(v_{:}^{\ ik}v_{:ik}+ v^iv_{:ik}^{\ \ \ k}\big)=2\big(|\nabla^2v|^2_+ +(n+4)|\nabla v|^2_+-\langle \nabla v, \nabla w\rangle_+\big), 
$$
$$
(vw)_{:k}^{\ \ k} =v_{:k}^{\ \ k}w+2v_kw^k+vw_{:k}^{\ \ k}= 2\langle \nabla v, \nabla w\rangle_+ +[2(n+2)v-w]w. 
$$
Hence we get  the formula for $\triangle_+I=-I_{:k}^{\ \ k}$. 
\end{proof}

Denote by
\begin{equation}\label{eq.4}
II=\triangle_+I. 
\end{equation}
Using the asymptotical expansion of $\triangle_+$, it is obvious that
$
II|_{M}=0. 
$

\begin{lemma}\label{lem.4}
For $II$ defined in (\ref{eq.4}), 
$$
\begin{aligned}
\triangle_+ II+2(n+2)II =& (n+2-s)\big(-|\nabla^3v|^2_++(6n+16)|\nabla v|^2_++2W^+_{ikjl}v^{ij}v^{kl} \big) 
\\
&+2(n+1-s) \langle \nabla^2v, \nabla^2w\rangle_+ -4(n+3-s)\langle \nabla v, \nabla w\rangle_++\frac{n-s-2}{2}|\nabla w|^2_+. 
\end{aligned}
$$
\end{lemma}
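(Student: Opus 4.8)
The plan is to compute $\triangle_+II$ term by term, exactly as in the proof of Lemma \ref{lem.3}, and then add $2(n+2)II$. Recall that $II=\triangle_+ I$ is the expression supplied by Lemma \ref{lem.3}, a sum of the six quadratic building blocks $v^2$, $|\nabla v|_+^2$, $vw$, $|\nabla^2v|_+^2$, $\langle\nabla v,\nabla w\rangle_+$ and $w^2$ with explicit coefficients. So I need $\triangle_+$ of each of these six quantities. Three of them — $\triangle_+(v^2)$, $\triangle_+(|\nabla v|_+^2)$ and $\triangle_+(vw)$ — already appear inside the proof of Lemma \ref{lem.3}, so I reuse those identities verbatim. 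For the remaining three I repeat the same Bochner-style bookkeeping: differentiate the product, contract, and feed in the field equations $v_{:k}^{\ \ k}=2(n+2)v-w$, $w_{:k}^{\ \ k}=0$ together with the once-differentiated identities $v_{:ik}^{\ \ \ k}=(n+4)v_i-w_i$ and $w_{:ik}^{\ \ \ k}=-nw_i$ recorded in Lemma \ref{lem.3}. The computation of $\triangle_+(w^2)=-2|\nabla w|_+^2$ is immediate since $w$ is $\triangle_+$-harmonic, and the computation of $\triangle_+\langle\nabla v,\nabla w\rangle_+$ uses only the two first-order identities above, producing $\langle\nabla^2v,\nabla^2w\rangle_+$, $\langle\nabla v,\nabla w\rangle_+$ and $|\nabla w|_+^2$ terms.

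The one genuinely new ingredient is $\triangle_+(|\nabla^2v|_+^2)$. Writing $|\nabla^2v|_+^2=v_{:ij}v^{:ij}$ and contracting twice gives
$$
(|\nabla^2v|_+^2)_{:k}^{\ \ k}=2\,v^{:ij}\,v_{:ijk}^{\ \ \ \ k}+2|\nabla^3v|_+^2,
$$
so everything reduces to the rough Laplacian of the Hessian, $v_{:ijk}^{\ \ \ \ k}$. I obtain this by commuting the three covariant derivatives past the trace, expressing $v_{:ijk}^{\ \ \ \ k}$ as $(v_{:k}^{\ \ k})_{:ij}$ plus curvature corrections. This is exactly where the Einstein condition $R^+_{ij}=-n[g_+]_{ij}$ does its work: the Ricci terms collapse to constant multiples of $v_{:ij}$ and of the lower-order gradient terms, the $\nabla\mathrm{Ric}$ terms vanish outright, and the surviving full-curvature term, after contraction with $v^{:ij}$, is precisely what yields the Weyl contribution $2W^+_{ikjl}v^{ij}v^{kl}$ once the Riemann tensor is decomposed into its Weyl and scalar parts on the Einstein background. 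Substituting $v_{:k}^{\ \ k}=2(n+2)v-w$ turns $(v_{:k}^{\ \ k})_{:ij}$ into $2(n+2)v_{:ij}-w_{:ij}$, which is what feeds the $\langle\nabla^2v,\nabla^2w\rangle_+$ term in the final formula.

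With all six Laplacians in hand, the last step is purely algebraic: assemble them with the coefficients dictated by $II$ and add $2(n+2)II$. The role of the shift $2(n+2)$ — the same spectral shift that makes $(\triangle_++2(n+2))v=w$ — is to annihilate the zeroth-order blocks: the $v^2$, $vw$ and $w^2$ contributions cancel, and so do the $|\nabla^2v|_+^2$ terms (which arise both from $\triangle_+(|\nabla v|_+^2)$ and from the Ricci part of $\triangle_+(|\nabla^2v|_+^2)$, against the $|\nabla^2v|_+^2$ already present in $II$). What remains is exactly the claimed combination of $|\nabla^3v|_+^2$, $|\nabla v|_+^2$, the Weyl term, $\langle\nabla^2v,\nabla^2w\rangle_+$, $\langle\nabla v,\nabla w\rangle_+$ and $|\nabla w|_+^2$. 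I expect the main difficulty to be the third-derivative commutation inside $\triangle_+(|\nabla^2v|_+^2)$ — pinning down the curvature sign conventions and correctly isolating the Weyl term — together with the coefficient bookkeeping required to verify that every lower-order term cancels against $2(n+2)II$.
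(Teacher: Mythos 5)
Your plan is correct and coincides with the paper's own proof: the paper likewise computes $\triangle_+$ of each quadratic block via the Bochner-type commutation identities already set up in Lemma \ref{lem.3}, uses the Einstein condition to reduce the Ricci terms to constants, invokes the second Bianchi identity to kill the derivative-of-curvature terms ($R^+_{imjk:l}$ contracted over $l=k$), decomposes $\mathrm{Riem}^+$ into its Weyl and constant-curvature parts to isolate the $W^+_{ikjl}v^{ij}v^{kl}$ contribution, and then verifies that the $v^2$, $vw$, $w^2$ and $|\nabla^2 v|^2_+$ blocks cancel against $2(n+2)II$. The one ingredient you leave implicit is that the vanishing of the divergence $R^{+\ \ \ \ \ k}_{imjk:}$ requires the contracted second Bianchi identity and not merely $\nabla\mathrm{Ric}^+=0$, which the paper spells out explicitly.
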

\begin{proof}
Let $R^+_{imjk}$ be the Riemannian curvature tensor of $(X^{n+1}, g_+)$. Then since $R^+_{ij}=-n[g_+]_{ij}$,  
$$R^+_{imjk}=W^+_{imjk}-[g_+]_{ij}[g_+]_{mk}+[g_+]_{ik}[g_+]_{mj},$$
Moreover, $R^+_{ij:k}=0$ and $R_{imjk:}^{+\ \ \ \ \ k}=0$. The later is because of Bianchi identity
$$
R^+_{jkim:l}+R^+_{jkml:i}+R^+_{jkli:m}=0. 
$$
Therefore, 
$$
\begin{aligned}
v_{:}^{\ ij}v_{:ijkl}=& v{:}^{\ ij}(v_{:ikj}-R^+_{imjk}v^m)_{:l}
\\
=&v_{:}^{\ ij}(v_{:kijl}-R^+_{imjk}v^{\ \ m}_{:l}-R^+_{imjk:l}v^m)
\\
=& v_{:}^{\ ij}(v_{:kilj}-R^+_{kmjl}v^{\ \ m}_{:i}-R^+_{imjl}v^{\ \ m}_{:k}-R^+_{imjk}v^{\ \ m}_{:l}-R^+_{imjk:l}v^m)
\\
=& v_{:}^{\ ij}[(v_{:kli}-R^+_{kmil}v^m)_{:j} -R^+_{kmjl}v^{\ \ m}_{:i}-R^+_{imjl}v^{\ \ m}_{:k}-R^+_{imjk}v^{\ \ m}_{:l}-R^+_{imjk:l}v^m]
\\
=& v_{:}^{\ ij}(v_{:klij}-R^+_{kmil}v^{\ \ m}_{:j}- R^+_{kmil:j}v^m -R^+_{kmjl}v^{\ \ m}_{:i}-R^+_{imjl}v^{\ \ m}_{:k}-R^+_{imjk}v^{\ \ m}_{:l}-R^+_{imjk:l}v^m).
\end{aligned}
$$
So 
$$
\begin{aligned}
v_{:}^{\ ij}v_{:ijkl}=&v{:}^{\ ij}(v^{\ k}_{:\ kij}+R^+_{mi}v^{\ \ m}_{:j}+R^+_{mi:j}v^m+R^+_{mj}v^{\ \ m}_{:i}-2R^+_{imjk}v_{:}^{\ mk}-R_{imjk:}^{+\ \ \ \ \ k}v^m)
\\
=&v_{:}^{\ ij}(v^{\ k}_{:\ kij}-2nv_{:ij}+2v_{:k}^{\ \ k}[g_+]_{ij}-2v_{:ij}-2W^+_{imjk}v{:}^{\ mk}). 
\end{aligned}
$$
Since $v_{:k}^{\ \ k}=2(n+2)v-w$, we have
$$
\begin{aligned}
v_{:}^{\ ij}v_{:ijk}^{\ \ \ \ k}
&=2|\nabla^2 v|^2_+   -\langle\nabla^2 v,\nabla^2w\rangle_+  +2[2(n+2)v-w]^2-2W^+_{imjk}v_{:}^{\ ij}v_{:}^{\ mk}. 
\end{aligned}
$$
Hence
$$
\begin{aligned}
(|\nabla^2 v|_+^2)_{:k}^{\ \ k} =2\big(|\nabla^3v|_+^2 + 2|\nabla^2 v|^2_+   -\langle\nabla^2 v,\nabla^2w\rangle_+  +2[2(n+2)v-w]^2-2W^+_{imjk}v{:}^{\ mk}\big). 
\end{aligned}
$$
Moreover,
$$
(\langle \nabla v, \nabla w\rangle_+)_{:k}^{\ \ k} = v_{:ik}^{\ \ \ k}w^i+2v_{:ik}w{:}^{\ ik}+v^iw_{:ik}^{\ \ \ k} =2\langle \nabla^2v, \nabla^2w\rangle_++4\langle\nabla v,\nabla w\rangle_+-|\nabla w|^2_+, 
$$
$$
(w^2)_{:k}^{\ \ k}=2w_kw^k+2ww_{:k}^{\ \ k}=2|\nabla w|^2_+. 
$$
And we get the formula for  $\triangle_+II=-II_{:k}^{\ \ k}$. 
\end{proof}

\begin{lemma}
Assume the $(X^{n+1}, g_+)$ is locally conformally flat; $J$ is a positive constant and $Q_4\geq 0$ on $M$.  Then the test function $\psi$ satisfies
$$
\begin{gathered}
\triangle_+\psi-s(n-s)\psi<0
\end{gathered}
$$
all over $X$. 
\end{lemma}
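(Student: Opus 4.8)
The plan is to reduce everything to the sign of the quantity $I$ and then run a maximum-principle argument up the ladder $I\rightsquigarrow II\rightsquigarrow III$ furnished by Lemmas \ref{lem.3} and \ref{lem.4}. Since $\gamma\in(1,2)$ and $n\geq 5$ force $n-s=\tfrac n2-\gamma>0$ and $n+2-s=\tfrac n2+2-\gamma>0$, and since $v>0$ and $\psi>0$, the identity $\tfrac{\triangle_+\psi}{\psi}-s(n-s)=(n-s)v^{-2}I$ shows that $\triangle_+\psi-s(n-s)\psi<0$ on $X$ is equivalent to $I<0$ on $X$. The boundary data are already recorded: by Lemma \ref{lem.2} one has $I|_M=K<0$, while $II|_M=0$; moreover both $I$ and $II$ extend continuously to the compact manifold with boundary $\overline X$.

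First I would show that $II=\triangle_+I\leq 0$ on all of $X$. The point is that Lemma \ref{lem.4} exhibits $II$ as a solution of $(\triangle_+ +2(n+2))II=III$ with a positive zeroth-order coefficient. If $II$ were positive somewhere, its supremum would be attained at an interior point $p_1$, since $II|_M=0$; at $p_1$ one has $\triangle_+II\geq 0$ (recall $\triangle_+=-\mathrm{div}\,\mathrm{grad}$), whence $III(p_1)=\triangle_+II(p_1)+2(n+2)II(p_1)\geq 2(n+2)II(p_1)>0$. Thus it suffices to establish $III\leq 0$ at such a point, which contradicts $III(p_1)>0$ and forces $II\leq 0$. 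Granting $II\leq 0$, the function $I$ is subharmonic for the analyst's Laplacian $\mathrm{div}\,\mathrm{grad}=-\triangle_+$; since $I$ is continuous on the compact $\overline X$ with $I|_M=K<0$, the strong maximum principle forbids an interior maximum, so $I$ attains its maximum on $M$ and hence $I<0$ throughout the interior, which is exactly the assertion.

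The hard part will be controlling the sign of $III$, the third-order Bochner expression on the right of Lemma \ref{lem.4}. Here local conformal flatness enters decisively: it makes $W^+_{ikjl}=0$, removing the indefinite Weyl term $2(n+2-s)W^+_{ikjl}v^{ij}v^{kl}$, which has no preferred sign. For the surviving terms I would exploit the structural identities established inside the proof of Lemma \ref{lem.4}, namely $v_{:ik}^{\ \ k}=(n+4)v_i-w_i$ and $w_{:ik}^{\ \ k}=-nw_i$, together with the fact that $w$ is a positive harmonic function. The first identity yields, via Cauchy--Schwarz on the contracted third derivative, a refined Kato inequality $|\nabla^3 v|^2_+\geq\tfrac{1}{n+1}\big|(n+4)\nabla v-\nabla w\big|^2_+$, which I would use to let the negative term $-(n+2-s)|\nabla^3 v|_+^2$ absorb the positive contribution $(n+2-s)(6n+16)|\nabla v|_+^2$ and the terms in $\nabla w$. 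The genuine obstacle is that the mixed quantities $\langle\nabla^2 v,\nabla^2 w\rangle_+$ and $\langle\nabla v,\nabla w\rangle_+$ are not pointwise sign-definite and are not dominated by the traced Kato estimate alone; closing the inequality requires combining this estimate with the first-order condition $\nabla II=0$ at the maximum point and the constraint equations for $v$ and $w$, and it is precisely here that the sign restrictions $n+2-s>0$, $n-s>0$ coming from $\gamma\in(1,2)$, and the simplifying choice $\lambda=n+2$ (which makes $v^{-2}=O(x^4)$ and kills the $w$-independent lower-order clutter), must be used. This sign analysis of $III$ is the technical heart of the argument; everything else is the soft maximum-principle packaging described above.
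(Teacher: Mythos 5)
Your reduction to $I<0$ and the maximum-principle ladder $I\rightsquigarrow II\rightsquigarrow \triangle_+II+2(n+2)II$ is exactly the paper's architecture, and your handling of the boundary values $I|_M=K<0$, $II|_M=0$ is fine. But the step you defer --- the pointwise sign of the right-hand side of Lemma \ref{lem.4} --- is the entire content of the lemma, and the route you sketch for it would not close. First, you overlook the decisive consequence of the hypothesis that $J$ is a \emph{constant}: with $\lambda=n+2$ the coefficient $(\lambda-2)(n-\lambda+2)$ vanishes, so $w$ is harmonic with constant boundary datum $w_0>0$, hence $w\equiv w_0$ is constant on $X$. This annihilates every term involving $\nabla w$ and $\nabla^2 w$ in Lemmas \ref{lem.3} and \ref{lem.4}; the ``mixed quantities'' $\langle\nabla^2v,\nabla^2w\rangle_+$ and $\langle\nabla v,\nabla w\rangle_+$ that you identify as the genuine obstacle are identically zero, and no first-order information at a maximum point is needed. (This is precisely why the lemma assumes $J$ constant rather than merely $J>0$.)

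Second, your proposed mechanism for absorbing $(6n+16)|\nabla v|_+^2$ into $-|\nabla^3v|_+^2$ --- a traced refined Kato inequality $|\nabla^3v|_+^2\geq\frac{1}{n+1}|(n+4)\nabla v-\nabla w|_+^2$ --- is quantitatively far too weak: with $w$ constant it gives only $\frac{(n+4)^2}{n+1}|\nabla v|_+^2$, and $\frac{(n+4)^2}{n+1}<6n+16$ for all $n\geq 1$, so the positive term is not absorbed. The paper's argument is not an inequality at all but an exact completion of the square: setting $V_{ijk}=v_{:ijk}-v_i[g_+]_{jk}-v_j[g_+]_{ik}-2v_k[g_+]_{ij}$ and using \emph{all three} trace identities of $\nabla^3v$ (the two traces $v_{:ik}^{\ \ \ k}=(n+4)v_i$ from the Ricci identity and the trace $\nabla(\triangle_{g_+}^{\phantom{+}}v)=2(n+2)\nabla v$, valid since $w$ is constant), one checks that $\langle\nabla^3v,\,v_i[g_+]_{jk}+v_j[g_+]_{ik}+2v_k[g_+]_{ij}\rangle_+=(6n+16)|\nabla v|_+^2$ coincides with the squared norm of that pure-gradient tensor, so $-|\nabla^3v|_+^2+(6n+16)|\nabla v|_+^2=-|V|_+^2\leq 0$ identically. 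Together with $W^+=0$ this gives $\triangle_+II+2(n+2)II=-(n+2-s)|V|_+^2\leq 0$ everywhere, after which your soft packaging goes through. As written, your proof has a genuine gap at its self-described ``technical heart,'' and the tool you reach for there cannot fill it.
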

\begin{proof}
Since $J$ is a positive constant,  
 $w=w_0$ is also a positive constant. 
By Lemma \ref{lem.2},  $I|_{M}=K< 0$ and by Lemma \ref{lem.3}, 
$$
\begin{aligned}
II=\triangle_+I=&(n+2-s)\left(\frac{1}{2}|\nabla^2v|^2_+ +\frac{n}{2}|\nabla v|_+^2-4(n+2)v^2\right)
+[3(n+2)-2s]vw-\frac{1}{2}w^2. 
\end{aligned}
$$
Since $(X^{n+1}, g_+)$ is locally conformally flat,  the Weyl tensor $W^+=0$. By Lemma \ref{lem.4}, 
$$
\begin{aligned}
\triangle_+II+2(n+2)II& = (n+2-s)\Big( -|\nabla^3v|^2_+ +(6n+16)|\nabla v|^2 \Big) =-(n+2-s)|V|_+^2\leq 0, 
\end{aligned}
$$
where 
$$
V_{ijk}=v_{:ijk}-v_{i}[g_+]_{jk}-v_{j}[g_+]_{ik}-2v_{k}[g_+]_{ij}.
$$ 
Notice that $II|_{M}=0$. 
Therefore, by maximum principle 
$$
II=\triangle_+I \leq 0, 
$$
all over $X$, which together with $I|_{M}=K<0$ implies that 
$$
 I<0
$$
all over $X$. 

\end{proof}

\begin{proposition}\label{prop.2.2}
Let $(X^{n+1}, g_+)$ ($n\geq 5$) be a locally conformally flat Poinar\'{e}-Einstein manifold with conformal infinity $(M, [g])$. Fix a representative $g$ for the conformal infinity. Assume the scalar curvature $R$ is a positive constant and $Q_4\geq 0$ on $M$. Then for all  $\gamma\in (1,2)$, $Q_{2\gamma}>0$. 
\end{proposition}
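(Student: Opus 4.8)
The plan is to deduce $Q_{2\gamma}>0$ from the strict inequality $u_{2\gamma}>0$ for the coefficient appearing in the expansion of the solution $u$ of (\ref{eq.1}). First I would record that for $\gamma\in(1,2)$ one has $\Gamma(\gamma)>0$ and $\Gamma(-\gamma)>0$ (since $-\gamma\in(-2,-1)$, where $\Gamma$ is positive), and $n-2\gamma>0$ because $n\geq 5$; hence in
$$
u_{2\gamma}=2^{-2\gamma}\frac{\Gamma(-\gamma)}{\Gamma(\gamma)}\frac{n-2\gamma}{2}Q_{2\gamma}
$$
the scalar factor multiplying $Q_{2\gamma}$ is strictly positive, so $Q_{2\gamma}>0$ is equivalent to $u_{2\gamma}>0$. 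Thus the whole proposition reduces to showing $u_{2\gamma}>0$.

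The comparison is carried out through the quotient $f=\psi/u$, where $\psi$ is the test function of (\ref{eq.2}) with $\lambda=n+2$; recall $u>0$ and, by the last lemma, $\psi>0$ with $\triangle_+\psi-s(n-s)\psi<0$ on all of $X$, where $s=\frac{n}{2}+\gamma$. Writing $\psi=fu$ and using $\triangle_+(fu)=u\triangle_+f+f\triangle_+u-2\langle\nabla f,\nabla u\rangle_+$ together with $\triangle_+u=s(n-s)u$, I would obtain
$$
u\,\triangle_+f-2\langle\nabla f,\nabla u\rangle_+=\triangle_+\psi-s(n-s)\psi<0.
$$
Dividing by $u>0$ gives $\triangle_+f-2u^{-1}\langle\nabla f,\nabla u\rangle_+<0$; this is an elliptic inequality for $f$ with \emph{no zeroth-order term}, which is the whole point of comparing against the exact solution $u$ rather than against $\psi$.

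Next I would run the maximum principle for this drift Laplacian. Since $u$ and $\psi$ share the same leading factor $x^{n-s}$ with coefficient $1$ and the same $x^2$-coefficient ($\psi_2=u_2$), the ratio extends continuously to $\overline{X}$ with $f|_M=1$. On the compact manifold $\overline{X}$ the continuous function $f$ attains its maximum; an interior maximum is impossible, because there $\nabla f=0$ and $\triangle_+f\geq 0$, contradicting the strict inequality above (which at such a point reads $\triangle_+f<0$). Hence the maximum is attained on $M$, giving $f\leq 1$ on $\overline{X}$.

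Finally I would read off the sign of $u_{2\gamma}$ from the boundary asymptotics. Dividing the two expansions and using $\psi_2=u_2$ yields
$$
f-1=-u_{2\gamma}\,x^{2\gamma}+(\psi_4-u_4)\,x^4+O(x^{\min\{2\gamma+2,5\}}).
$$
Because $\gamma\in(1,2)$ gives $2<2\gamma<4$, the $x^{2\gamma}$ term dominates whenever $u_{2\gamma}\neq 0$. If $u_{2\gamma}<0$ then $f-1>0$ for small $x$, and if $u_{2\gamma}=0$ then $f-1=(\psi_4-u_4)x^4+\cdots>0$ for small $x$ by the first lemma ($\psi_4>u_4$); either case forces $f>1$ at interior points near $M$, contradicting $f\leq 1$. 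Hence $u_{2\gamma}>0$, and therefore $Q_{2\gamma}>0$. The main obstacle is the maximum-principle step: the metric $g_+$ degenerates at $M$, so one must justify passing to the compact $\overline{X}$ via the continuous extension of $f$, and one must arrange the comparison so that the resulting operator has no zeroth-order term; this is exactly why the ratio is taken against the exact solution $u$, and why the matching $\psi_2=u_2$ together with $\psi_4>u_4$ is needed to pin down the sign at the correct order $x^{2\gamma}$.
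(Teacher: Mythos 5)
Your proof is correct and follows essentially the same route as the paper: comparison of $u$ with the test function $\psi$ (with $\lambda=n+2$) via a quotient, a maximum principle argument using $\triangle_+\psi-s(n-s)\psi<0$, and reading off the sign of $u_{2\gamma}$ from the matched asymptotics $\psi_2=u_2$ and $\psi_4>u_4$. The only cosmetic difference is that you work with $f=\psi/u$ (which kills the zeroth-order term and lets you exclude an interior maximum), whereas the paper works with $u/\psi$ and excludes an interior positive minimum; both give $u\geq\psi$ and hence $u_{2\gamma}>0$, i.e.\ $Q_{2\gamma}>0$.
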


\begin{proof}
Similar  as Guillarmou-Qing did in \cite{GQ1}, we
compare the two functions $u$ and $\psi$, which are defined in (\ref{eq.1}) and (\ref{eq.2}) with $\lambda=n+2$. First $u/\psi$ satisfies the equation:
$$
\triangle_+\left(\frac{u}{\psi}\right) = \left(s(n-s)-\frac{\triangle_+\psi}{\psi}\right)\frac{u}{\psi}
+2\nabla\left(\frac{u}{\psi}\right) \frac{\nabla\psi}{\psi}, 
\quad \left(\frac{u}{\psi}\right)_{M}=1. 
$$
Notice that $u/\psi>0$. Applying maximum principle, we show that $u/\psi$ can not attain an interior positive minimum. Hence $u/\psi\geq 1$, i.e. $u\geq \psi$. Near the boundary, this means
$$
1+x^2u_2+x^{2\gamma}u_{2\gamma}+x^4u_4+O(x^{5})
\geq 1+x^2\psi_2+x^4\psi_4+O(x^{5})
$$
Since $\psi_2=u_2$ and $\psi_4>u_4$, we have $u_{2\gamma}>0$. Hence  $Q_{2\gamma}>0$ on $M$. 
\end{proof}

\textbf{Proof of Theorem \ref{thm.1}}: Part (a) is proved in Proposition \ref{prop.2.2}. Part (b) (c) (d) are from Proposition \ref{prop.2.2} and Proposition \ref{prop.2.1}. Part (e) is proved by the same proof for Proposition 4.2 in \cite{GQ1}. Recall the positive results in \cite{GQ1}. Then in our setting, $P_{2\gamma}$ is positive for all $\gamma\in [0, 2)$ and hence the first scattering pole $s_0\leq \frac{n}{2}$. Furthermore, if $Q_4(p)>0$ at some point $p$, then $P_4$ is also positive. Therefore the first scattering pole $s_0< \frac{n}{2}$.

\vspace{0.2in}

\end{document}